\numberwithin{equation}{section}
\newtheorem{thm}{Theorem}[section]
\newtheorem{lem}[thm]{Lemma}
\newtheorem{prop}[thm]{Proposition}
\newtheorem{cor}[thm]{Corollary}
\theoremstyle{definition}
\newtheorem{defn}[thm]{Definition}
\newtheorem{rem}[thm]{Remark}
\newtheorem{exam}[thm]{Example}
\newcommand{\bC}{{\mathbb{C}}}
\newcommand{\bE}{{\mathbb{E}}}
\newcommand{\bN}{{\mathbb{N}}}
\newcommand{\bR}{{\mathbb{R}}}
\newcommand{\A}{{\mathcal{A}}}
\newcommand{\B}{{\mathcal{B}}}
\newcommand{\C}{{\mathcal{C}}}
\newcommand{\I}{{\mathcal{I}}}
\newcommand{\K}{{\mathcal{K}}}
\renewcommand{\L}{{\mathcal{L}}}
\newcommand{\M}{{\mathcal{M}}}
\newcommand{\N}{{\mathcal{N}}}
\newcommand{\X}{{\mathcal{X}}}
\newcommand{\ep}{\varepsilon}
\newcommand{\qand}{\quad\text{and}\quad}
\newcommand{\qqand}{\qquad\text{and}\qquad}
\newcommand{\alg}{\mathrm{alg}}
\newcommand{\Cov}{{\rm Cov}}
\newcommand{\bim}{\rhd\!\!\rhd}
\tikzset{Box/.style={very thick, rounded corners}}
\tikzset{marked/.style={star, star point height = .75mm, star points =5, fill=black,minimum size=2mm, inner sep=0mm} }
\tikzset{verythickline/.style = {line width=7pt}}
\tikzset{thickline/.style = {line width=5pt}}
\tikzset{medthick/.style = {line width=3pt}}
\tikzset{med/.style = {line width=2pt}}
\tikzset{count/.style = {fill=white,circle,draw,thin, inner sep=2pt}}
\tikzset{rcount/.style = {fill=white,rectangle,draw,thin,inner sep=2pt, rounded corners}}
\tikzset{cpr/.style = {draw,fill=white,rectangle,thin, rounded corners}}
\definecolor{ggreen}{HTML}{00BB33}
\begin{document}

\nocite{*}

\title[Bi-monotonic independence for pairs of algebras]{Bi-monotonic independence for pairs of algebras} 

\author{Yinzheng Gu, Takahiro Hasebe, and Paul Skoufranis}

\address{Department of Mathematics and Statistics, Queen's University, Jeffery Hall, Kingston, Ontario, K7L 3N6, Canada} 
\email{gu.y@queensu.ca}

\address{Department of Mathematics, Hokkaido University, North 10, West 8, Kita-ku, Sapporo 060-0810, Japan}
\email{thasebe@math.sci.hokudai.ac.jp}

\address{Department of Mathematics and Statistics, York University, 4700 Keele Street, Toronto, Ontario, M3J 1P3, Canada}
\email{pskoufra@yorku.ca}

\date{\today}
\subjclass[2010]{Primary 46L53; Secondary 46L54.}
\keywords{bi-monotonic independence, bi-monotonic cumulants, bi-monotonic convolution.}

\begin{abstract}
In this article, the notion of bi-monotonic independence is introduced as an extension of monotonic independence to the two-faced framework for a family of pairs of algebras in a non-commutative space. The associated cumulants are defined and a moment-cumulant formula is derived in the bi-monotonic setting. In general the bi-monotonic product of states is not a state and the bi-monotonic convolution of probability measures on the plane is not a probability measure. This provides an additional example of how positivity need not be preserved under conditional bi-free convolutions.
\end{abstract}

\maketitle

\section{Introduction and motivation}

In non-commutative probability theory, many notions of independence for algebras have been introduced in various contexts. According to the classification work \cites{M2002, M2003, S1997}, only five of them (namely tensor, free, Boolean, monotonic, and anti-monotonic) are universal/natural in a certain sense. Out of the five independences, the rule for tensor independence is identical to the classical notion of independence, and the rule for anti-monotonic independence is essentially the same as the one for monotonic independence upon reversing the order structure on random variables. Thus we shall focus on the remaining three notions of independence, which for a family $\{\A_k\}_{k \in K}$ of algebras in a non-commutative probability space $(\A, \varphi)$ are given as follows: 
\begin{enumerate}[$\quad(1)$]
\item the family $\{\A_k\}_{k \in K}$ is \textit{freely independent} with respect to $\varphi$ if
\[
\varphi(a_1\cdots a_n) = 0
\]
whenever $a_j \in \A_{k_j}$, $k_j \in K$, $k_j \neq k_{j+1}$, and $\varphi(a_j) = 0$ for all $j$;
\item the family $\{\A_k\}_{k \in K}$ is \textit{Boolean independent} with respect to $\varphi$ if
\[
\varphi(a_1\cdots a_n) = \varphi(a_1)\cdots\varphi(a_n)
\]
whenever $a_j \in \A_{k_j}$, $k_j \in K$, and $k_j \neq k_{j+1}$ for all $j$;
\item assuming $K$ is equipped with a linear order $<$, the family $\{\A_k\}_{k \in K}$ is \textit{monotonically independent} with respect to $\varphi$ if
\[
\varphi(a_1\cdots a_{p - 1}a_p a_{p + 1}\cdots a_n) = \varphi(a_p)\varphi(a_1\cdots a_{p - 1}a_{p + 1}\cdots a_n)
\]
whenever $a_j \in \A_{k_j}$, $k_j \in K$ for all $j$, and $k_{p - 1} < k_p > k_{p + 1}$, where one of the inequalities is eliminated if $p = 1$ or $p = n$.
\end{enumerate}

Given a non-commutative probability space $(\A, \varphi)$, elements of $\A$ are called non-commutative random variables and are said to be freely/Boolean/monotonically independent with respect to $\varphi$ if the algebras they generate in $\A$ are freely/Boolean/monotonically respectively.  If in addition $\A$ is a unital C$^*$-algebra, $\varphi$ is a state, and $a \in \A$ is self-adjoint, then the distribution of $a$ with respect to $\varphi$ can be identified as a compactly supported probability measure $\mu_a$ on $\bR$. If $a_1 , a_2 \in \A$ are self-adjoint and freely/Boolean/monotonically independent with respect to $\varphi$, then the distribution $\mu_{a_1 + a_2}$ of $a_1 + a_2$ with respect to $\varphi$ depends only on $\mu_{a_1}$ and $\mu_{a_2}$ and is said to be the additive free/Boolean/monotonic convolution of $\mu_{a_1}$ and $\mu_{a_2}$, denoted $\mu_{a_1} \boxplus \mu_{a_2}$, $\mu_{a_1} \uplus \mu_{a_2}$, and $\mu_{a_1} \rhd \mu_{a_2}$ respectively. 

To describe the above convolutions in terms of $\mu_{a_1}$ and $\mu_{a_2}$, define the Cauchy transform of $a_j$ by $G_{a_j}(z) = \varphi((z - a_j)^{-1})$ for $z \in \bC^+$. It is known (see \cite{V1986}) that $G_{a_j}$ is invertible at $\infty$. Let $K_{a_j}$ be the inverse under composition of $G_{a_j}$ so that $K_{a_j}(0) = \infty$, and let $F_{a_j}$ be the reciprocal of $G_{a_j}$. Define $R_{a_j}(z) = K_{a_j}(z) - \frac{1}{z}$ on a neighbourhood of $0$ and $E_{a_j}(z) = z - F_{a_j}(z)$ for $z \in \bC^+$. Then 
\begin{align*}
R_{a_1 + a_2}(z) &= R_{a_1}(z) + R_{a_2}(z) & & \text{if $a_1$ and $a_2$ are freely independent \cite{V1986},} \\
E_{a_1 + a_2}(z) &= E_{a_1}(z) + E_{a_2}(z)& & \text{if $a_1$ and $a_2$ are Boolean independent \cite{SW1997}, and} \\
F_{a_1 + a_2}(z) &= F_{a_1}(F_{a_2}(z)) & & \text{if $a_1$ and $a_2$ are monotonically independent \cite{M2000}.} 
\end{align*}

Recently, Voiculescu extended the notion of free independence to bi-free independence for a family $\{(\A_{k, \ell}, \A_{k, r})\}_{k \in K}$ of pairs of algebras in a non-commutative probability space $(\A, \varphi)$ in \cite{V2014,V2016-1}. In particular, bi-free independence leads to additive convolutions on probability measures on $\bR^2$. More precisely, if $(a, b)$ is a pair of commuting self-adjoint operators in a C$^*$-non-commutative probability space $(\A, \varphi)$, then the joint distribution of $(a, b)$ with respect to $\varphi$ can be identified as a compactly supported probability measure $\mu_{(a, b)}$ on $\bR^2$. If $(a_1, b_1)$ and $(a_2, b_2)$ are two such pairs which are bi-freely  independent with respect to $\varphi$, then $\mu_{(a_1 + a_2, b_1 + b_2)}$ depends only on $\mu_{(a_1, b_1)}$ and $\mu_{(a_2, b_2)}$ and is called the additive bi-free convolution of $\mu_{(a_1, b_1)}$ and $\mu_{(a_2, b_2)}$, denoted $\mu_{(a_1, b_1)} \boxplus\!\boxplus \mu_{(a_2, b_2)}$. Likewise, the notion of Boolean independence has been extended to bi-Boolean independence in \cite{GS2017}.  However, in the case of bi-Boolean independence, $\varphi$ need not be a state because bi-Boolean product of states is not a state in general. This point was omitted in the paper \cite{GS2017} and correspondingly some results are not correct (see the errata to \cite{GS2017}). However, we can still define a bi-Boolean convolution purely combinatorially, namely in terms of moments.

To linearize bi-free and bi-Boolean convolutions, the two-variable analogues of $R$- and $E$-transforms were introduced as follows. Define the two-variable Cauchy transform of $(a, b)$ by $G_{(a, b)}(z, w) = \varphi((z - a)^{-1}(w - b)^{-1})$. Then define 
\begin{align}
R_{(a, b)}(z, w) &= zR_a(z) + wR_b(w) + \widetilde{R}_{(a, b)}(z, w) \qand \notag \\
E_{(a, b)}(z, w) &= \frac{1}{z}E_a(z) + \frac{1}{w}E_b(w) + \widetilde{E}_{(a, b)}(z, w), \label{bi-energy}
\end{align}
where
\[
\widetilde{R}_{(a, b)}(z, w) = 1 - \frac{zw}{G_{(a, b)}(K_a(z), K_b(w))} \qand \widetilde{E}_{(a, b)}(z, w) = \frac{G_{(a, b)}(z, w)}{G_a(z)G_b(w)} - 1.
\]
In particular, $R_{(a, b)}$ and $E_{(a, b)}$ have the properties that
\begin{align*}
R_{(a_1 + a_2, b_1+ b_2)}(z,w) &= R_{(a_1,b_1)}(z,w) + R_{(a_2,b_2)}(z,w) & & \text{if $(a_1,b_1)$ and $(a_2,b_2)$ are bi-freely independent \cite{V2016-1}, and} \\
E_{(a_1 + a_2, b_1+ b_2)}(z,w) &= E_{(a_1,b_1)}(z) + E_{(a_2,b_2)}(z) & & \text{if $(a_1,b_1)$ and $(a_2,b_2)$ are bi-Boolean independent \cite{GS2017}.}
\end{align*}

It is reasonable to investigate whether the notion of monotonic independence has a suitable extension for pairs of algebras with a corresponding additive convolution. There are two natural candidates for such an extension, which are described in Section \ref{sec:bi-mono} below and referred to as bi-monotonic independence of types I and II. These two types are not equivalent and the second type was recently and independently discovered and studied in \cite{G2017}.  We prefer the first type as it is more consistent and in-line with observations and results in bi-free probability.  One example of this is described as follows and other examples are provided in Section \ref{sec:bi-mono}. 

In \cite{BBGS2017} it was noticed that the notions of bi-free and bi-Boolean independences for pairs of commuting self-adjoint operators can be studied in the operator-valued framework for free and Boolean independences. In this setting, a C$^*$-non-commutative probability space $(\A, \varphi)$ is replaced by a C$^*$-$\B$-non-commutative probability space $(\M, \bE, \B)$, where $\M$ is a unital C$^*$-algebra, $\B \subset \M$ is a unital C$^*$-subalgebra, and $\bE: \M \to \B$ is a linear, positive, unit-preserving conditional expectation. The notions of free, Boolean, and monotonic independences with amalgamation over $\B$ with respect to $\bE$ are defined in a similar way to the scalar-valued case. For a self-adjoint operator $X \in \M$, for $b \in \B$ define 
\[
R_X(b) = K_X(b) - b^{-1}, \quad E_X(b) = b - G_X(b)^{-1}, \qand F_X(b) = G_X(b)^{-1},
\]
where $G_X(b) = \bE((b - X)^{-1})$ for $\|b^{-1}\| < \frac{1}{\|X\|}$ or $\Im(b) > 0$, and $K_X$ is the inverse under composition of $G_X$ on a neighbourhood of $0$ in $\B$. Then 
\begin{align*}
R_{X_1 + X_2}(b) &= R_{X_1}(b) + R_{X_2}(b) & & \text{if $X_1$ and $X_2$ are freely independent over $\B$ \cite{V1995},} \\
E_{X_1 + X_2}(b) &= E_{X_1}(b) + E_{X_2}(b) & & \text{if $X_1$ and $X_2$ are Boolean independent over $\B$ \cite{P2009}, and} \\
F_{X_1 + X_2}(b) &= F_{X_1}(F_{X_2}(b)) & & \text{if $X_1$ and $X_2$ are monotonically independent over $\B$ \cite{P2008}.} 
\end{align*}

Consider a pair $(a, b)$ of commuting self-adjoint operators in a C$^*$-non-commutative probability space $(\A, \varphi)$.  Let 
$X = \begin{bmatrix}
a & 0\\
0 & b
\end{bmatrix}$ 
which is a $M_2(\bC)$-valued self-adjoint random variable in the C$^*$-$M_2(\bC)$-non-commutative probability space $(M_2(\A), M_2(\varphi), M_2(\bC))$, where $M_2(\varphi)$ denotes the map $\varphi \otimes \mathrm{Id}_{M_2(\bC)}$ from $\A\otimes M_2(\bC) $ onto $M_2(\bC)$. A crucial observation in \cite{BBGS2017} is that if we consider the $M_2(\bC)$-valued transforms $R_X$ and $E_X$ of $X$ on the $2 \times 2$ upper triangular matrix $\begin{bmatrix}
z & \zeta\\
0 & w
\end{bmatrix} \in M_2(\bC)$, then
\[R_X\left(\begin{bmatrix}
z & \zeta\\
0 & w
\end{bmatrix}\right) = \begin{bmatrix}
R_a(z) & \frac{\zeta}{zw}\widetilde{R}_{(a, b)}(z, w)\\
0 & R_b(w)
\end{bmatrix} \qand E_X\left(\begin{bmatrix}
z & \zeta\\
0 & w
\end{bmatrix}\right) = \begin{bmatrix}
E_a(z) & -\zeta\widetilde{E}_{(a, b)}(z, w)\\
0 & E_b(w)
\end{bmatrix}.\]
In other words, if $(a_1, b_1)$ and $(a_2, b_2)$ are pairs of commuting self-adjoint operators and $X_j$ denotes the $2 \times 2$ diagonal matrix with $a_j, b_j$ as entries, then
$R_{(a_1 + a_2, b_1 + b_2)}(z, w) = R_{(a_1, b_1)}(z, w) + R_{(a_2, b_2)}(z, w)$ if and only if 
\[R_{X_1 + X_2}\left(\begin{bmatrix}
z & \zeta\\
0 & w
\end{bmatrix}\right) = R_{X_1}\left(\begin{bmatrix}
z & \zeta\\
0 & w
\end{bmatrix}\right) + R_{X_2}\left(\begin{bmatrix}
z & \zeta\\
0 & w
\end{bmatrix}\right),
\]
and the same relation holds between the $E$-transforms. Moreover, one can verify that
\[
F_{X_1 + X_2}\left(\begin{bmatrix}
z & \zeta\\
0 & w
\end{bmatrix}\right) = \begin{bmatrix}
F_{a_1 + a_2}(z) & \zeta F_{a_1 + a_2}(z)F_{b_1 + b_2}(w)G_{(a_1 + a_2, b_1 + b_2)}(z, w)\\
0 & F_{b_1 + b_2}(w)
\end{bmatrix}
\]
and
\begin{align*}
F_{X_1}&\left(F_{X_2}\left(\begin{bmatrix}
z & \zeta\\
0 & w
\end{bmatrix}\right)\right) \\
&= \begin{bmatrix}
F_{a_1}(F_{a_2}(z)) & \zeta F_{a_2}(z)F_{b_2}(w)G_{(a_2, b_2)}(z, w)F_{a_1}(F_{a_2}(z))F_{b_1}(F_{b_2}(w))G_{(a_1, b_1)}(F_{a_2}(z), F_{b_2}(w))\\
0 & F_{b_1}(F_{b_2}(w))
\end{bmatrix}.
\end{align*}

It turns out in Subsection \ref{sec:convolution} that if $(a_1, b_1)$ and $(a_2, b_2)$ are bi-monotonically independent with respect to $\varphi$ in the type I sense, then both $a_1, a_2$ and $b_1, b_2$ are monotonically independent with respect to $\varphi$ (so $F_{a_1 + a_2}(z) = F_{a_1}(F_{a_2}(z))$ and $F_{b_1 + b_2}(z) = F_{b_1}(F_{b_2}(z))$), and 
\begin{equation}\label{eqn:biMonCauchy}
G_{(a_1 + a_2, b_1 + b_2)}(z, w) = G_{(a_1, b_1)}(F_{a_2}(z), F_{b_2}(w))G_{(a_2, b_2)}(z, w)F_{a_2}(z)F_{b_2}(w),
\end{equation}
and thus
\[
F_{X_1 + X_2}\left(\begin{bmatrix}
z & \zeta\\
0 & w
\end{bmatrix}\right)  = F_{X_1}\left(F_{X_2}\left(\begin{bmatrix}
z & \zeta\\
0 & w
\end{bmatrix}\right)\right).
\]
This consistency with other bi-probability theories is some of the evidence that the type I bi-monotonic independence is a more favourable extension of monotonic independence for pairs of algebras.
On the other hand it should be noted that type I bi-monotonic product does not preserve states in general, while the type II bi-monotonic product does. 

Besides this introduction, this paper contains three sections which are organized as follows. In Section \ref{sec:bi-mono}, we define the two types of bi-monotonic independence as mentioned above. We introduce a bi-monotonic product for a family of pairs of algebras and use it to define the type I notion. We also prove the convolution formula \eqref{eqn:biMonCauchy}. In Section \ref{sec:cumulants}, we introduce the bi-monotonic cumulants and prove a bi-monotonic moment-cumulant formula via combinatorics. The formula is given by summing over bi-monotonic partitions which naturally extends the monotonic moment-formula to the pairs of algebras setting. Finally, we show that bi-monotonic (and therefore general conditionally bi-free) convolutions do not preserve probability measures on $\bR^2$ in Section \ref{sec:analytic}.

\section{Bi-monotonic independence}\label{sec:bi-mono}

In this section, two types of bi-monotonic independence are introduced. The first notion is defined using the conditionally bi-free product similar to the relation between monotonic and conditionally free products.  The second notion is defined by considering the left and right actions of operators on Muraki's monotonic product space similar to the original definition of bi-free independence.

Throughout the paper we call $(\A, \varphi)$ a non-commutative space if $\A$ is an algebra and $\varphi$ is a linear functional on $\A$, assuming $\varphi(1_\A)=1$ if $\A$ is unital. If $\A$ is a unital $^*$-algebra and $\varphi$ is a state, i.e.\ a unital positive linear functional, then we call $(\A,\varphi)$ a non-commutative probability space. We call $(\A, \varphi,\psi)$ a double non-commutative space if $\A$ is an algebra and $\varphi$ and $\psi$ are linear functionals, which are unital if $\A$ is unital.

\subsection{Bi-monotonic independence of type I}

To begin recall that free, Boolean, and monotonic independences each corresponds to a universal construction in the following sense. Let $\{\A_k\}_{k \in K}$ be a family of algebras and for each $k \in K$ let $\varphi_k: \A_k \to \bC$ be a linear functional ($\varphi_k(1) = 1$ if $\A_k$ is unital). Then there exists a pair $(\A, \varphi)$ such that every $\A_k$ is a subalgebra of $\A$, $\varphi|_{\A_k} = \varphi_k$, and (depending on the construction) $\{\A_k\}_{k \in K}$ is freely, Boolean, or monotonically independent with respect to $\varphi$. We denote $(\A, \varphi)$ by $*_{k \in K}(\A_k, \varphi_k)$, $\diamond_{k \in K}(\A_k, \varphi_k)$, and $\rhd_{k \in K}(\A_k, \varphi_k)$ respectively and $\varphi$ by $*_{k \in K}\varphi_k$, $\diamond_{k \in K}\varphi_k$, and $\rhd_{k \in K}\varphi_k$ respectively.

It turns out that these three products can be unified in terms of another product, called conditionally free (c-free for short) product \cite{BLS1996}. More precisely, suppose $\{\A_k\}_{k \in K}$ is a family of unital algebras such that each $\A_k$ is equipped with a pair $(\varphi_k, \psi_k)$ of unital linear functionals and $\A_k$ decomposes as $\A_k = \bC 1 \oplus \A_k^\circ$ with $\A_k^\circ = \ker(\psi_k)$.   Let $\A =  *_{k \in K}\A_k$ be the algebraic free product of $\{\A_k\}_{k \in K}$ with identification of units.  Then the c-free product of $\{(\varphi_k, \psi_k)\}_{k \in K}$, denoted $(\varphi, \psi) = *_{k \in K}(\varphi_k, \psi_k)$, is the pair of unital linear functionals on $\A$ defined by
\[
\psi(a_1\cdots a_n) = 0 \qand \varphi(a_1\cdots a_n) = \varphi_{k_1}(a_1)\cdots\varphi_{k_n}(a_n)
\]
for all $n \geq 1$, $a_j \in \A_{k_j}$, $k_j \in K$, $k_j \neq k_{j+1}$, and $\psi_{k_j}(a_j) = 0$. By construction, $\varphi|_{\A_k} = \varphi_k$ and $\psi|_{\A_k} = \psi_k$ for all $k \in K$, and $\psi = *_{k \in K}\psi_k$.  Notice that if $\varphi_k = \psi_k$ for all $k \in K$, then $\varphi = \psi$. 

If $\{\A_k\}_{k \in K}$ is a family of algebras each equipped with a linear functional $\varphi_k$, then the Boolean product $\diamond_{k \in K}\varphi_k$ can be realized in terms of c-free product as follows. For every $k \in K$, let $\widetilde{\A_k} := \bC 1 \oplus \A_k$ be the unitization of $\A_k$, let $\widetilde{\varphi_k}$ be the unique unital linear extension of $\varphi_k$ to $\widetilde{\A_k}$, and let $\delta_k$ denote the delta functional on $\widetilde{\A_k}$ defined by $\delta_k(\lambda 1 + a_0) = \lambda$ for $\lambda \in \bC$ and $a_0 \in \A_k$. Let $\widetilde{\A} = *_{k \in K}\widetilde{\A_k}$ and consider the c-free product $(\varphi, \delta) = *_{k \in K}(\widetilde{\varphi_k}, \delta_k)$ on $\widetilde{\A}$.  Then $\widetilde{\A} \simeq \bC 1 \oplus \A$, where $\A = \sqcup_{k \in K}\A_k$ is the free product of $\{\A_k\}_{k \in K}$ without identification of units, and $\varphi|_\A = \diamond_{k \in K}\varphi_k$. Note that both the free product and the Boolean product are associative, which can be either shown directly or follow from the fact that the c-free product is associative.

It was shown by Franz in \cite{F2006} that the monotonic product can also be realized as a c-free product. For simplicity, we restrict to the case $K = \{1 < 2\}$. Let $(\A_1, \varphi_1)$ and $(\A_2, \varphi_2)$ be as in the Boolean case with unitizations and extensions $(\widetilde{\A_1}, \widetilde{\varphi_1})$ and $(\widetilde{\A_2}, \widetilde{\varphi_2})$ respectively, and let $\delta_1$ be the delta functional on $\widetilde{\A_1}$. Moreover, let $\widetilde{\A} = \widetilde{\A_1} * \widetilde{\A_2} \simeq \widetilde{\A_1 \sqcup \A_2}$ and consider the c-free product $(\varphi, \psi) = (\widetilde{\varphi_1}, \delta_1) * (\widetilde{\varphi_2}, \widetilde{\varphi_2})$ on $\widetilde{\A}$. Then it follows from \cite{F2006}*{Proposition 3.1} that $\varphi|_{\A_1 \sqcup \A_2} = \varphi_1 \rhd \varphi_2$. Note \cite{F2001} demonstrated that the monotonic product is associative which does not follow from the associativity of the c-free product due to the asymmetry in $(\widetilde{\varphi_1}, \delta_1)$ and $(\widetilde{\varphi_2}, \widetilde{\varphi_2})$. 

We now turn our attention to the pairs of algebras setting. The notion of bi-free independence was introduced in \cite{V2014} to study the left and right actions on a reduced free product space simultaneously. Moreover, this new notion of independence is also universal in the sense that if $\{(\A_{k, \ell}, \A_{k, r})\}_{k \in K}$ is a family of pairs of unital algebras such that for each $k \in K$ there is a unital linear functional $\varphi_k: \A_{k, \ell} * \A_{k, r} \to \bC$, then \cite{V2014}*{Corollary 2.10} implies that there is a unique unital linear functional $\varphi: *_{k \in K}(\A_{k, \ell} * \A_{k, r}) \to \bC$ such that $\varphi|_{\A_{k, \ell} * \A_{k, r}} = \varphi_k$ and $\{(\A_{k, \ell}, \A_{k, r})\}_{k \in K}$ is bi-freely independent with respect to $\varphi$. The functional $\varphi$ is called the bi-free product of $\{\varphi_k\}_{k \in K}$ and is denoted by $\varphi = *\!*_{k \in K}\varphi_k$. 

The notions of conditionally bi-free (c-bi-free for short) and bi-Boolean independences were introduced in \cites{GS2016, GS2017} as generalizations of c-free and Boolean independences and are universal. For c-bi-free, given a pair of unital linear functionals $\varphi_k, \psi_k: \A_{k, \ell} * \A_{k, r} \to \bC$ for each $k \in K$ there is a unique pair of unital linear functionals $\varphi, \psi: *_{k \in K}(\A_{k, \ell} * \A_{k, r}) \to \bC$ such that $\varphi|_{\A_{k, \ell} * \A_{k, r}} = \varphi_k$, $\psi|_{\A_{k, \ell} * \A_{k, r}} = \psi_k$, and $\{(\A_{k, \ell}, \A_{k, r})\}_{k \in K}$ is c-bi-freely independent with respect to $(\varphi, \psi)$. The pair $(\varphi, \psi)$  is called the c-bi-free product of $\{(\varphi_k, \psi_k)\}_{k \in K}$ and is denoted by $(\varphi, \psi) = *\!*_{k \in K}(\varphi_k, \psi_k)$, where $\psi = *\!*_{k \in K}\psi_k$ is the bi-free product of $\{\psi_k\}_{k \in K}$. The bi-Boolean product can be realized in terms of the c-bi-free product by taking unitizations and the delta functionals in the same way as the relation between the Boolean and c-free products. For more details, see \cite{GS2017}*{Section 3}.

We now define the bi-monotonic product of linear functionals along the same lines as how the monotonic product can be realized in terms of the c-free product. Consider first the case that $K = \{1 < 2\}$.

\begin{defn}\label{def:bimonotone}
Let $(\A_{1, \ell}, \A_{1, r})$ and $(\A_{2, \ell}, \A_{2, r})$ be two pairs of algebras with linear functionals $\varphi_k: \A_{k, \ell} \sqcup \A_{k, r} \to \bC$. The \textit{bi-monotonic product} of $\varphi_1$ and $\varphi_2$, denoted $\varphi = \varphi_1 \bim \varphi_2$, is the linear functional on $(\A_{1, \ell} \sqcup \A_{1, r}) \sqcup (\A_{2, \ell} \sqcup \A_{2, r})$ defined as follows. Let $\widetilde{\A_{k, \ell}}$ and $\widetilde{\A_{k, r}}$ be the unitizations of $\A_{k, \ell}$ and $\A_{k, r}$ respectively, let $\widetilde{\varphi_k}$ be the unique unital linear extension of $\varphi_k$ to $\widetilde{\A_{k, \ell}} * \widetilde{\A_{k, r}} \simeq \widetilde{\A_{k, \ell} \sqcup \A_{k, r}}$, let $\delta_1$ be the delta functional on $\widetilde{\A_{1, \ell} \sqcup \A_{1, r}}$, and let $(\widetilde{\varphi}, \widetilde{\psi}) = (\widetilde{\varphi_1}, \delta_1) *\!* (\widetilde{\varphi_2}, \widetilde{\varphi_2})$ be the c-bi-free product on $\widetilde{\A_{1, \ell} \sqcup \A_{1, r}} * \widetilde{\A_{2, \ell} \sqcup \A_{2, r}} \simeq \widetilde{(\A_{1, \ell} \sqcup \A_{1, r}) \sqcup (\A_{2, \ell} \sqcup \A_{2, r})}$. Then $\varphi$ is defined to be the restriction of $\widetilde{\varphi}$ to $(\A_{1, \ell} \sqcup \A_{1, r}) \sqcup (\A_{2, \ell} \sqcup \A_{2, r})$.
\end{defn}

The associativity of the bi-monotonic product does not automatically follow from that of the c-bi-free product due to the antisymmetry of the functionals and thus must be demonstrated.  To begin we  review some notation used in bi-free and bi-Boolean probabilities (see \cites{CNS2015-1, CNS2015-2, GS2017}).

Given $n \geq 1$ and elements $a_1, \dots, a_n$ in a non-commutative space,  $V = \{v_1 < \cdots < v_s\} \subset \{1, \dots, n\}$, we denote
\[
(a_1, \dots, a_n)|_V := (a_{v_1}, \dots, a_{v_s}) \qand a_V := a_{v_1}\cdots a_{v_s}.
\]
For a map $\chi: \{1, \dots, n\} \to \{\ell, r\}$ with $\chi^{-1}(\{\ell\}) = \{i_1 < \cdots < i_p\}$ and $\chi^{-1}(\{r\}) = \{i_{p + 1} > \cdots > i_n\}$, define a permutation $s_\chi$ on $\{1, \dots, n\}$ by $s_\chi(j) = i_j$ for $1 \leq j \leq n$, and define a total order $\prec_\chi$ on $\{1, \dots, n\}$ by $i_1 \prec_\chi \cdots \prec_\chi i_n$. A subset $V \subset \{1, \dots, n\}$ is said to be a \textit{$\chi$-interval} if it is an interval with respect to $\prec_\chi$. In addition, we define $\min_{\prec_\chi}(V)$ and $\max_{\prec_\chi}(V)$ to be the minimal and maximal elements of $V$ with respect to $\prec_\chi$, respectively. 

Given $n \geq 1$, $\chi: \{1, \dots, n\} \to \{\ell, r\}$, and $\omega: \{1, \dots, n\} \to K$, let $\pi_{\chi, \omega}$ be the unique partition of $\{1, \dots, n\}$ with ordered blocks $V_1, \dots, V_m$ such that each $V_k$ is a $\chi$-interval, $\max_{\prec_\chi}(V_k) \prec_\chi \min_{\prec_\chi}(V_{k + 1})$, $\omega$ is constant on each $V_k$, and $\omega(V_k) \neq \omega(V_{k + 1})$ for all $1 \leq k \leq m - 1$.

For example, if $\chi^{-1}(\{\ell\}) = \{1, 3, 5, 7, 8, 11\}$ and $\chi^{-1}(\{r\}) = \{2, 4, 6, 9, 10, 12\}$, then the above total order $\prec_\chi$ is obtained by writing each number from $1,\dots,12$  either on the left or on the right of a folded line in a falling manner, and then by unfolding the line. It is  given by 
$$
1 \prec_\chi 3 \prec_\chi  5 \prec_\chi  7 \prec_\chi  8 \prec_\chi 11 \prec_\chi 12 \prec_\chi 10\prec_\chi 9\prec_\chi 6\prec_\chi 4\prec_\chi 2. 
$$ 
If furthermore $K = \{a, b\}$, $\omega^{-1}(\{a\}) = \{1, 2, 3, 4, 6, 8, 11, 12\}$ and $\omega^{-1}(\{b\}) = \{5, 7, 9, 10\}$, then the partition $\pi_{\chi, \omega}$ is given by 
\[
\pi_{\chi, \omega} = \{\{1, 3\}, \{2, 4, 6\}, \{5, 7\}, \{8, 11, 12\}, \{9, 10\} \}.
\]

\begin{align*}
\begin{tikzpicture}[baseline]
	\draw[thick, dashed] (1, 6) -- (1, -0.5) -- (-1, -0.5) -- (-1, 6);
	\draw[thick] (-1, 5.5) -- (-0.33, 5.5) -- (-0.33, 4.5) -- (-1, 4.5);
	\draw[thick] (-1, 3.5) -- (-0.33, 3.5) -- (-0.33, 2.5) -- (-1, 2.5);
	\draw[thick] (-1, 2) -- (-0.33, 2) -- (-0.33, 0) -- (1, 0);
	\draw[thick] (-1, 0.5) -- (-0.33, 0.5);
	\draw[thick] (1, 5) -- (0.33, 5) -- (0.33, 3) -- (1, 3);
	\draw[thick] (1, 4) -- (0.33, 4);
	\draw[thick] (1, 3) -- (0.33, 3);
	\draw[thick] (1, 1.5) -- (0.33, 1.5) -- (0.33, 1) -- (1, 1);
	\node[left] at (-1, 5.5) {$1$};
	\draw[fill=black] (-1,5.5) circle (0.075);
	\node[right] at (1, 5) {$2$};
	\draw[fill=black] (1,5) circle (0.075);
	\node[left] at (-1,4.5) {$3$};
	\draw[fill=black] (-1,4.5) circle (0.075);
	\node[right] at (1, 4) {$4$};
	\draw[fill=black] (1,4) circle (0.075);
	\node[left] at (-1, 3.5) {$5$};
	\draw[fill=white] (-1,3.5) circle (0.075);
	\node[right] at (1, 3) {$6$};
	\draw[fill=black] (1,3) circle (0.075);
	\node[left] at (-1, 2.5) {$7$};
	\draw[fill=white] (-1,2.5) circle (0.075);
	\node[left] at (-1, 2) {$8$};
	\draw[fill=black] (-1,2) circle (0.075);
	\node[right] at (1, 1.5) {$9$};
	\draw[fill=white] (1,1.5) circle (0.075);
	\node[right] at (1, 1) {$10$};
	\draw[fill=white] (1,1) circle (0.075);
	\node[left] at (-1, .5) {$11$};
	\draw[fill=black] (-1,.5) circle (0.075);
	\node[right] at (1,0) {$12$};
	\draw[fill=black] (1,0) circle (0.075);
\end{tikzpicture}
\end{align*}

The above definition was used in \cite{GS2017} to define bi-Boolean independence. The relevance here is that the partition $\pi_{\omega, \chi}$ was also used in the paper \cite{C2016} to provide another characterization of c-bi-free independence given as follows.

\begin{thm}[\cite{C2016}*{Theorem 8}]
\label{CBFChar}
A family $\{(\A_{k, \ell}, \A_{k, r})\}_{k \in K}$ of pairs of algebras in a double non-commutative space $(\A, \varphi, \psi)$ is c-bi-free with respect to $(\varphi, \psi)$ if and only if whenever $n \geq 1$, $\chi: \{1, \dots, n\} \to \{\ell, r\}$, $\omega: \{1, \dots, n\} \to K$, and $a_1, \dots, a_n \in \A$ with $a_j \in \A_{\omega(j), \chi(j)}$ such that $\psi(a_V) = 0$ for all $V \in \pi_{\omega, \chi}$, it follows that
\[\psi(a_1\cdots a_n) = 0 \qand \varphi(a_1\cdots a_n) = \prod_{V \in \pi_{\chi, \omega}}\varphi(a_V).\]
\end{thm}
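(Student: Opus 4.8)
The statement is the two-faced analogue of the Bo\.zejko--Leinert--Speicher description of conditional freeness, with the linear order on $\{1,\dots,n\}$ replaced by $\prec_\chi$ and single variables replaced by the block products $a_V$, $V\in\pi_{\chi,\omega}$. The plan is to route the proof through the combinatorial description of the c-bi-free product. Write $\varphi_k$, $\psi_k$ for the restrictions of $\varphi$, $\psi$ to $\A_{k,\ell}\sqcup\A_{k,r}$ (unitising if necessary) and let $(\varphi_0,\psi_0)=*\!*_{k\in K}(\varphi_k,\psi_k)$ be the c-bi-free product, which exists and is the unique pair with these restrictions for which the family is c-bi-free. It therefore suffices to prove: (i) $(\varphi_0,\psi_0)$ satisfies the displayed relations; and (ii) the displayed relations, together with the prescribed restrictions, determine all joint moments of the family. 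Granting (i) and (ii), a pair $(\varphi,\psi)$ satisfies the relations exactly when it coincides with $(\varphi_0,\psi_0)$, i.e.\ exactly when the family is c-bi-free with respect to it. Note also that the $\psi$-half of the statement is the instance $\varphi=\psi$ of the $\varphi$-half --- c-bi-freeness then degenerating to bi-freeness of $\psi$ --- so it is enough to treat the $\varphi$-half throughout.

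For (i) I would invoke the combinatorial theory of the c-bi-free product (see \cite{GS2016}): $\varphi_0$ has a moment--cumulant expansion $\varphi_0(a_1\cdots a_n)=\sum_{\pi}\kappa_\pi$ summed over the bi-non-crossing partitions $\pi$ (non-crossing with respect to $\prec_\chi$), with the conditional bookkeeping of \cite{BLS1996} (outer blocks of $\pi$ weighted by c-bi-free cumulants, inner blocks by bi-free cumulants); and the c-bi-free and bi-free cumulants of a c-bi-free family vanish on $\omega$-mixed arguments, so only $\pi$ with $\omega$-monochromatic blocks survive. Since $\pi_{\chi,\omega}$ is exactly the interval partition of $\prec_\chi$ into maximal $\omega$-constant intervals, and since the hypothesis $\psi_0(a_V)=0$ reads $\sum_{\sigma}\kappa^{\psi_0}_\sigma=0$ over the bi-non-crossing $\sigma$ of each interval $V$, a M\"obius/telescoping computation on the sublattice of bi-non-crossing partitions refining $\pi_{\chi,\omega}$ collapses the sum to $\prod_{V\in\pi_{\chi,\omega}}\varphi_0(a_V)$; taking $\varphi_0=\psi_0$ then gives $\psi_0(a_1\cdots a_n)=0$.

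For (ii) I would adapt the uniqueness argument for conditional freeness from \cite{BLS1996}, run through the cumulant formalism. A direct transcription fails: a block $V\in\pi_{\chi,\omega}$ with several elements need not be a contiguous factor of the word $a_1\cdots a_n$, and left and right variables from the same pair need not commute, so one cannot center and split off a block, nor reduce to \emph{alternating} words by merging neighbouring same-face or same-$\omega$ letters. Instead one argues by induction on $n$ that the displayed relations, applied to all shorter words, together with the restrictions $\varphi_k,\psi_k$, force the c-bi-free and bi-free cumulants of the family to vanish on $\omega$-mixed arguments; by the moment--cumulant inversion this determines all moments, hence $(\varphi,\psi)=(\varphi_0,\psi_0)$. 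The main obstacle is the collapse used in (i): it requires a careful compatibility between the natural-order block products $a_V$ appearing in the statement and their $\prec_\chi$-ordered cumulant expansions, together with a genuine lattice argument handling those bi-non-crossing partitions whose $\omega$-monochromatic blocks straddle several $\pi_{\chi,\omega}$-blocks --- precisely the contributions annihilated by the hypothesis $\psi(a_V)=0$; the non-commutativity of same-pair left and right variables, which rules out the elementary reduction available in the c-free case, is what forces the whole argument onto this cumulant footing.
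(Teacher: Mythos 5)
First, a point of reference: the paper does not prove this statement at all --- it is imported verbatim from Charlesworth's work (\cite{C2016}*{Theorem 8}) and used as a black box in the proof of Lemma \ref{lem:BiMonoFormula}. So there is no in-paper argument to measure your proposal against; it has to stand on its own.

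As an outline, your proposal follows the route one would expect (and essentially the route taken in \cite{C2016}): pass to the universal c-bi-free product, expand moments over bi-non-crossing partitions with the conditional inner/outer bookkeeping of outer blocks carrying $\varphi$-type cumulants and inner blocks carrying $\psi$-type cumulants, invoke vanishing of $\omega$-mixed cumulants, and show that the hypothesis $\psi(a_V)=0$ annihilates everything except the partitions refining $\pi_{\chi,\omega}$, whose sum factors as $\prod_V\varphi(a_V)$. The gap is that the one step carrying all the content is asserted rather than proved. Your phrase ``a M\"obius/telescoping computation on the sublattice of bi-non-crossing partitions refining $\pi_{\chi,\omega}$ collapses the sum'' points at the wrong set: the refining partitions are the easy part --- their contribution factors over the $\chi$-intervals by multiplicativity and yields $\prod_V\varphi(a_V)$ with no inversion needed. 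The entire difficulty lies in the complementary set of $\omega$-monochromatic bi-non-crossing partitions having a block that meets two distinct blocks $V_i,V_j$ of $\pi_{\chi,\omega}$. For such a partition the blocks of $\pi_{\chi,\omega}$ lying $\prec_\chi$-between $V_i$ and $V_j$ are covered by inner blocks, and one wants to resum the inner $\psi$-cumulants into the centred moments $\psi(a_{V_k})=0$; but a straddling block may contain only part of $V_i$ and part of $V_j$, so the nested region need not be a union of whole blocks of $\pi_{\chi,\omega}$, and the resummation does not obviously reassemble the quantities the hypothesis controls. You flag exactly this issue in your closing sentence but do not resolve it; resolving it is the theorem. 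The same applies to your step (ii): ``induction on $n$ forces the mixed cumulants to vanish'' is a restatement of what must be shown, not an argument. In short: correct strategy and an accurate diagnosis of the obstacles (non-contiguity of the $a_V$, failure of the elementary c-free grouping trick), but the decisive combinatorial lemma is left unproved in both directions of the equivalence.
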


Note that the above equations can be used to uniquely determine all mixed moments in terms of pure moments (i.e., moments of the individual pairs of algebras). We desire to prove the associativity of the bi-monotonic product.

\begin{thm}\label{Associativity}
The bi-monotonic product is associative.
\end{thm}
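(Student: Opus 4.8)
The plan is to extract from Definition \ref{def:bimonotone} a purely combinatorial description of the bi-monotonic product and then to verify associativity on that description, in the spirit of the combinatorial proof of associativity for the scalar monotonic product \cite{F2001} (which is needed precisely because Franz's realization of $\rhd$ through the c-free product \cite{F2006} does not make associativity formal). First I would unwind Definition \ref{def:bimonotone}: since $\widetilde\psi$ is the bi-free product of $\delta_1$ and $\widetilde{\varphi_2}$, it restricts to $\delta_1$ on $\widetilde{\A_{1,\ell}}*\widetilde{\A_{1,r}}$ and to $\widetilde{\varphi_2}$ on $\widetilde{\A_{2,\ell}}*\widetilde{\A_{2,r}}$, while on $\widetilde{\A_{1,\ell}\sqcup\A_{1,r}}=\bC 1\oplus(\A_{1,\ell}\sqcup\A_{1,r})$ the functional $\delta_1$ is multiplicative. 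Feeding this into Theorem \ref{CBFChar} together with a centering argument (center each label-$2$ letter against $\widetilde{\varphi_2}$, use that each label-$1$ letter already lies in $\ker\delta_1$, expand, and induct on word length while tracking how $\pi_{\omega,\chi}$ changes), I would establish that $\varphi:=\varphi_1\bim\varphi_2$ is the unique linear functional on $(\A_{1,\ell}\sqcup\A_{1,r})\sqcup(\A_{2,\ell}\sqcup\A_{2,r})$ restricting to $\varphi_k$ on $\A_{k,\ell}\sqcup\A_{k,r}$ and satisfying
\[
\varphi(a_1\cdots a_n)=\varphi_2(a_V)\,\varphi\big((a_1,\dots,a_n)|_{V^{c}}\big)
\]
for every $n$, every $\chi:\{1,\dots,n\}\to\{\ell,r\}$, every $\omega:\{1,\dots,n\}\to\{1,2\}$, every $a_j\in\A_{\omega(j),\chi(j)}$, and every block $V\in\pi_{\omega,\chi}$ with $\omega(V)=2$, where $V^{c}=\{1,\dots,n\}\setminus V$. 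Since iterating this rule collapses every mixed moment to a product of pure moments, it characterizes $\varphi$.

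For associativity I would set up the analogous rule for any linearly ordered family: call a linear functional on $\sqcup_{k\in K}(\A_{k,\ell}\sqcup\A_{k,r})$ bi-monotonic for $\{\varphi_k\}_{k\in K}$ if it restricts to each $\varphi_k$ and satisfies $\varphi(a_1\cdots a_n)=\varphi(a_V)\,\varphi((a_1,\dots,a_n)|_{V^{c}})$ for every block $V\in\pi_{\omega,\chi}$ whose label is strictly larger than the labels of its $\prec_\chi$-neighbouring blocks. This has at most one solution, since each application shortens the word and, when no reducible block remains, $\omega$ is constant and the value is forced to be $\varphi_{\omega(1)}(a_1\cdots a_n)$; restricting to $\chi\equiv\ell$ recovers the ordinary monotonic product, confirming this is the right notion. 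It then suffices to check that both $(\varphi_1\bim\varphi_2)\bim\varphi_3$ and $\varphi_1\bim(\varphi_2\bim\varphi_3)$ obey the $3$-fold rule. For the first I would apply the two-fold rule twice—once with $\A_{1,\ell}\sqcup\A_{1,r}$, $\A_{2,\ell}\sqcup\A_{2,r}$ grouped as level $1$ and $\A_{3,\ell}\sqcup\A_{3,r}$ as level $2$ (pulling out each label-$3$ block as $\varphi_3(a_V)$), then with the two-fold rule for $\varphi_1\bim\varphi_2$ on the shortened word (pulling out each label-$2$ block as $\varphi_2(a_V)$)—and for the second symmetrically, grouping $\A_{2,\ell}\sqcup\A_{2,r}$ with $\A_{3,\ell}\sqcup\A_{3,r}$ as the outer level $2$. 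Both computations reach the same product of pure moments because the coarse (two-level) partition refines compatibly to the fine (three-level) one: each coarse $\chi$-interval block is a $\prec_\chi$-consecutive union of fine blocks, and deleting a coarse block from the word leaves exactly the induced partition. Hence the two triple products agree, and ternary—thus full—associativity follows.

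The step I expect to be the main obstacle is the first: extracting the clean reduction rule from Theorem \ref{CBFChar}. The difficulty is that centering a label-$2$ letter produces a scalar term whose removal merges adjacent label-$1$ blocks and so changes $\pi_{\omega,\chi}$, and that a label-$2$ block need not occupy consecutive positions in $a_1\cdots a_n$, so the reduction must be organized with respect to $\prec_\chi$ rather than the natural order, with the asymmetry between $(\widetilde{\varphi_1},\delta_1)$ and $(\widetilde{\varphi_2},\widetilde{\varphi_2})$ tracked throughout; this is the bookkeeping that the two-faced framework adds to Franz's scalar computation. The partition-compatibility used in the associativity step is of the same combinatorial nature and, once the reduction rule is available, should follow from the properties of $\pi_{\omega,\chi}$ already used in the c-bi-free and bi-Boolean settings.
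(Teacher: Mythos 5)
Your proposal is correct and follows essentially the same route as the paper: your reduction rule is the content of the paper's Lemma \ref{lem:BiMonoFormula} (proved there by exactly the centering-plus-induction argument against Theorem \ref{CBFChar} that you sketch, choosing for each label-$2$ block a root of $\varphi_2((a_{v_1}-z)\cdots(a_{v_s}-z))$), and your verification that both triple products obey a common rule rests on the same coarse-versus-fine compatibility of the partitions $\pi_{\chi,\omega}$ that the paper uses. The only cosmetic difference is that you organize the associativity step through uniqueness of a local-maximum reduction rule (which is the paper's Corollary \ref{cor:bi-monotone-type-I-looks-like-monotone}, derived there as a consequence of associativity), whereas the paper evaluates both triple products explicitly to a common closed form.
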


Before presenting the proof of Theorem \ref{Associativity} we note that we may define bi-monotonic independence (of type I) as follows. Note that the definition holds if $K$ is infinite since one need only consider a finite number of $K$ at once when computing moments.

\begin{defn}
A linearly ordered family $\{(\A_{k, \ell}, \A_{k, r})\}_{k \in K}$ of pairs of algebras in a non-commutative space $(\A, \varphi)$ is said to be \textit{bi-monotonically independent} (of type I) with respect to $\varphi$ if the joint distributions of $\{(\A_{k, \ell}, \A_{k, r})\}_{k \in K}$ with respect to $\varphi$ and $\bim_{k \in K}\varphi|_{\A_{k, \ell} \sqcup \A_{k, r}}$ coincide. A linearly ordered two-faced family of non-commutative elements is bi-monotonically independent (of type I) with respect to $\varphi$ if the family of pairs of algebras they generate are bi-monotonically independent (of type I).
\end{defn}

To begin the proof of Theorem \ref{Associativity}, we require the following.

\begin{lem}\label{lem:BiMonoFormula}
Let $(\A_{1, \ell}, \A_{1, r})$ and $(\A_{2, \ell}, \A_{2, r})$ be pairs of algebras in a non-commutative space $(\A, \varphi)$ which are bi-monotonically independent with respect to $\varphi$. If $n \geq 1$, $\chi: \{1, \dots, n\} \to \{\ell, r\}$, $\omega: \{1, \dots, n\} \to \{1, 2\}$, and $a_1, \dots, a_n \in \A$ are such that $a_j \in \A_{\omega(j), \chi(j)}$, then
\begin{equation}\label{BiMonoFormula}
\varphi(a_1\cdots a_n) = \varphi(a_{W})\prod_{\substack{V \in \pi_{\chi, \omega}\\\omega(V) = 2}}\varphi(a_V),
\end{equation}
where $W = \{1, \dots, n\} \setminus \{V\,|\,V \in \pi_{\chi, \omega}, \omega(V) = 2\} = \{j \in \{1,\ldots, n\}\,|\,\omega(j) = 1\}$.
\end{lem}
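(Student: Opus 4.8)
The plan is to read the identity off from Theorem~\ref{CBFChar} applied to the c-bi-free pair $(\widetilde{\varphi},\widetilde{\psi})=(\widetilde{\varphi_1},\delta_1)*\!*(\widetilde{\varphi_2},\widetilde{\varphi_2})$ of Definition~\ref{def:bimonotone}; by definition of bi-monotonic independence $\varphi(a_1\cdots a_n)=\widetilde{\varphi}(a_1\cdots a_n)$, so I work throughout with $\widetilde{\varphi}$, $\widetilde{\psi}$ and the unital algebras $\widetilde{\A_{k,\ell}}$, $\widetilde{\A_{k,r}}$. The basic observation is that $\widetilde{\psi}$ equals $\delta_1$ on $\widetilde{\A_{1,\ell}\sqcup\A_{1,r}}$ and agrees with $\widetilde{\varphi}$ on $\widetilde{\A_{2,\ell}\sqcup\A_{2,r}}$; hence, for \emph{any} word and \emph{any} $\chi$-interval $V$ on which $\omega$ is constant, $\widetilde{\psi}(a_V)=0$ automatically when $\omega(V)=1$ (then $a_V\in\A_{1,\ell}\sqcup\A_{1,r}\subseteq\ker\delta_1$), while $\widetilde{\psi}(a_V)=\widetilde{\varphi}(a_V)$ when $\omega(V)=2$. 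In particular Theorem~\ref{CBFChar} applies verbatim as soon as $\widetilde{\varphi}(a_V)=0$ for every algebra-$2$ block $V$, giving $\widetilde{\varphi}(a_1\cdots a_n)=\prod_{V\in\pi_{\chi,\omega}}\widetilde{\varphi}(a_V)$, which is consistent with \eqref{BiMonoFormula} (both sides vanish once an algebra-$2$ block is present).

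For the general statement I would induct on the length $n$, the cases $n=0$ and $\omega\equiv1$ being immediate. For the inductive step set $S:=\omega^{-1}(\{2\})$ and, for $j\in S$, write $a_j=\mathring{a}_j+\widetilde{\varphi}(a_j)1$ with $\mathring{a}_j:=a_j-\widetilde{\varphi}(a_j)1$; since $\widetilde{\psi}$ and $\widetilde{\varphi}$ agree on $\widetilde{\A_{2,\ell}\sqcup\A_{2,r}}$ we get $\widetilde{\psi}(\mathring{a}_j)=0$. Expanding multilinearly,
\[
\widetilde{\varphi}(a_1\cdots a_n)=\sum_{T\subseteq S}\Big(\prod_{j\in T}\widetilde{\varphi}(a_j)\Big)\,\widetilde{\varphi}(b^T),
\]
where $b^T$ is $a_1\cdots a_n$ with the letters indexed by $T$ deleted and $a_j$ replaced by $\mathring{a}_j$ for $j\in S\setminus T$. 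Since each algebra-$2$ block $V$ is a $\prec_\chi$-interval whose $\prec_\chi$-neighbours are algebra-$1$ blocks, the algebra-$2$ blocks of $b^T$ are precisely the nonempty sets $V\setminus T$, and the algebra-$1$ letters of $b^T$ still make up $a_W$; so for $T\neq\emptyset$ the word $b^T$ is strictly shorter and the induction hypothesis gives $\widetilde{\varphi}(b^T)=\widetilde{\varphi}(a_W)\prod_{V:\,\omega(V)=2}\widetilde{\varphi}(\mathring{a}_{V\setminus T})$, where $\mathring{a}_U$ denotes the product of the $\mathring{a}_j$, $j\in U$, in natural order and $\widetilde{\varphi}(\mathring{a}_\emptyset):=1$. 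On the other hand, expanding $a_V=\prod_{j\in V}(\mathring{a}_j+\widetilde{\varphi}(a_j)1)$ in the right-hand side of \eqref{BiMonoFormula} turns it into $\widetilde{\varphi}(a_W)\sum_{T\subseteq S}\big(\prod_{j\in T}\widetilde{\varphi}(a_j)\big)\prod_{V:\,\omega(V)=2}\widetilde{\varphi}(\mathring{a}_{V\setminus T})$. Comparing, every term matches except the $T=\emptyset$ one, so the lemma is reduced to the single identity
\[
\widetilde{\varphi}(b^\emptyset)=\widetilde{\varphi}(a_W)\prod_{V:\,\omega(V)=2}\widetilde{\varphi}(\mathring{a}_V),
\]
for the word $b^\emptyset$ in which every algebra-$2$ letter has been $\widetilde{\psi}$-centered.

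Establishing this identity is the crux and the expected main obstacle. The difficulty is that although every \emph{letter} of $b^\emptyset$ is $\widetilde{\psi}$-centered, an algebra-$2$ block $V$ with $|V|\ge2$ — which arises precisely when its letters are $\prec_\chi$-consecutive but interleaved with algebra-$1$ letters in the natural order, as already happens for $\chi=(\ell,r,\ell,r)$ and $\omega=(1,2,1,2)$ — has $\widetilde{\psi}(a_V)=\widetilde{\varphi}(\mathring{a}_V)$, which need not vanish, and its block product is not a subword that could be centered anew; so Theorem~\ref{CBFChar} does not apply directly to $b^\emptyset$. I would finish by a multilinearity argument. Both sides of the displayed identity are multilinear in $(\mathring{a}_j)_{j\in S}$, and on the subset where $\widetilde{\varphi}(\mathring{a}_V)=0$ holds simultaneously for all algebra-$2$ blocks $V$, Theorem~\ref{CBFChar} \emph{does} apply to $b^\emptyset$ and forces $\widetilde{\varphi}(b^\emptyset)=0$ (a centered algebra-$2$ factor occurs in the product $\prod_{V\in\pi_{\chi,\omega}}\widetilde{\varphi}(a_V)$ it produces), matching the right-hand side there. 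The remaining step — passing from this subset to all $\widetilde{\psi}$-centered tuples, i.e.\ showing that the difference of the two sides, a multilinear functional vanishing on $\prod_{V:\,\omega(V)=2}\{\widetilde{\varphi}(\mathring{a}_V)=0\}$, is identically zero — is the delicate point; it can be set up as an auxiliary induction on $\sum_{V:\,\omega(V)=2}(|V|-1)$ that peels one non-singleton algebra-$2$ block at a time, or deduced more structurally from the c-bi-free moment–cumulant expansion together with the fact that $\delta_1$ has vanishing free cumulants, which makes every algebra-$1$ cumulant block non-nested and thereby confines each algebra-$2$ block's letters to a single cumulant block, collapsing $\widetilde{\varphi}(b^\emptyset)$ to the stated product. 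Carefully tracking how $\prec_\chi$ interacts with the natural order inside $b^\emptyset$ is where the genuine bookkeeping lies.
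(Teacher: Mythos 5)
Your setup is right---invoking Theorem~\ref{CBFChar}, inducting on $n$, and reducing to a centered word---and your first reduction (expanding $a_j=\mathring{a}_j+\widetilde{\varphi}(a_j)1$ over $T\subseteq S$ and checking that the blocks of the shortened words are the nonempty $V\setminus T$) is sound. But the argument has a genuine gap exactly where you flag it: the identity $\widetilde{\varphi}(b^\emptyset)=\widetilde{\varphi}(a_W)\prod_{V}\widetilde{\varphi}(\mathring{a}_V)$ is never actually proved. The multilinearity argument you propose does not close it: the set of tuples $(\mathring{a}_j)_{j\in S}$ satisfying $\widetilde{\varphi}(\mathring{a}_V)=0$ for every algebra-$2$ block $V$ is cut out by conditions that are themselves multilinear (not linear) in the entries whenever $|V|\geq 2$, so it is not a linear subspace, and a multilinear functional vanishing on it need not vanish identically. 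Your two fallback sketches (peeling non-singleton blocks; routing through c-bi-free cumulants) are not carried out, and the first has no evident mechanism---once the letters are individually centered there is nothing left to subtract that would center the block product.

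The paper avoids this obstruction by centering differently: instead of replacing $a_j$ by $a_j-\varphi(a_j)1$, it subtracts a \emph{common} scalar from every letter of each algebra-$2$ block, setting $a_j^\circ=a_j-\lambda_V$ where $\lambda_V$ is a complex root of the polynomial $z\mapsto\varphi_2\bigl((a_{v_1}-z)\cdots(a_{v_s}-z)\bigr)$ for $V=\{v_1<\cdots<v_s\}$. This forces $\psi(a_V^\circ)=\varphi_2(a_V^\circ)=0$ for the whole block product---precisely the hypothesis of Theorem~\ref{CBFChar}---so both the true functional and the candidate functional vanish on $a_1^\circ\cdots a_n^\circ$, and expanding recovers the top term plus lower-order terms handled by the induction hypothesis. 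That single change (existence of a root of a degree-$s$ polynomial, rather than letter-by-letter centering) is the missing idea; with it your outline becomes essentially the paper's proof.
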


\begin{proof}
We will use Theorem \ref{CBFChar} to prove this result. There is a simple alternate proof of this result using the additional technology of c-bi-free cumulants from \cite{GS2016}. 

We take the universal free product realizations of bi-monotone independence in Definition \ref{def:bimonotone} and regard the free products as subalgebras of $\A$ via the canonical  homomorphisms $\A_{k,\ell}\sqcup\A_{k,r} \to \A$. We also consider the unitization $\widetilde \A$ of $\A$ into which the unitizations of free products embed. 
Let $\varphi_k = \varphi|_{\A_{k, \ell} \sqcup \A_{k, r}}$ for $k \in \{1, 2\}$, and define $\varphi'$ on $\A' := \widetilde{(\A_{1, \ell} \sqcup \A_{1, r}) \sqcup (\A_{2, \ell} \sqcup \A_{2, r})}$ by $\varphi' = \widetilde{\varphi_1 \bim \varphi_2}$ (so $\varphi'$ applied to the unit is 1). Then $\varphi' = \varphi|_{\A'}$ by the assumption of bi-monotonic independence.
 Thus Theorem \ref{CBFChar} implies for all $n \geq 1$, $\chi: \{1, \dots, n\} \to \{\ell, r\}$, $\omega: \{1, \dots, n\} \to \{1, 2\}$, and $a_1, \dots, a_n \in \widetilde \A$ with $a_j \in \A_{1, \chi(j)}$ if $\omega(j) = 1$ and $a_j \in \widetilde{\A_{2, \chi(j)}}$ if $\omega(j) = 2$ such that $\varphi_2(a_V) = 0$ for $V \in \pi_{\omega, \chi}$ with $\omega(V) = 2$, that
\begin{equation}\label{phi1}
\varphi'(a_1\cdots a_n) = \prod_{V \in \pi_{\chi, \omega}}\varphi_{\omega(V)}(a_V)
=\begin{cases} 
0, & \text{if~} \omega(j)=2 \text{~for some $j$}, \\
\varphi_1(a_1\cdots a_n),& \text{otherwise}, 
\end{cases}
\end{equation}
as $\psi(\A_{1, \ell} \sqcup \A_{1, r}) = \{0\}$ (as $\psi$ is the delta functional on $\A_{1, \ell} \sqcup \A_{1, r}$) and as $\psi(a_V) = \varphi_2(a_V) = 0$ for all $V \in \pi_{\chi, \omega}$ with $\omega(V) = 2$.

On the other hand, define a unital linear functional $\varphi'' : \A' \to \bC$ as follows: for all $n \geq 1$, $\chi: \{1, \dots, n\} \to \{\ell, r\}$, $\omega: \{1, \dots, n\} \to \{1, 2\}$, and $a_1, \dots, a_n \in \widetilde\A$ with $a_j \in \A_{1, \chi(j)}$ if $\omega(j) = 1$ and $a_j \in \widetilde{\A_{2, \chi(j)}}$ if $\omega(j) = 2$, let
\[
\varphi''(a_1\cdots a_n) = \varphi_1(a_{W})\prod_{\substack{V \in \pi_{\chi, \omega}\\\omega(V) = 2}}\varphi_2(a_V)
\]
where $W = \{1, \dots, n\} \setminus \{V\,|\,V \in \pi_{\chi, \omega}, \omega(V) = 2\}$ (and $\varphi''$ applied to the unit is 1). Then it is easy to verify that $\varphi''$ is well-defined.

We claim for all $n \geq 1$, $\chi: \{1, \dots, n\} \to \{\ell, r\}$, $\omega: \{1, \dots, n\} \to \{1, 2\}$, and $a_1, \dots, a_n \in \widetilde\A$ with $a_j \in \A_{1, \chi(j)}$ if $\omega(j) = 1$ and $a_j \in \widetilde{\A_{2, \chi(j)}}$ if $\omega(j) = 2$ that
\[
\varphi''(a_1\cdots a_n) = \varphi'(a_1\cdots a_n) 
\]
which thereby will complete the proof.  Notice if $\omega$ is constant, then the claim holds as $\varphi'' = \varphi_{\omega(1)} = \varphi'$ in this case. Therefore we may assume that $\omega$ is not constant. 

To complete the claim, we proceed by induction on $n$ with the base case $n = 1$ following from the constant $\omega$ case.  Suppose the result holds for $n-1$ for some $n \geq 2$.  Fix $\chi: \{1, \dots, n\} \to \{\ell, r\}$, $\omega: \{1, \dots, n\} \to \{1, 2\}$, and $a_1, \dots, a_n \in \widetilde\A$ with $a_j \in \A_{1, \chi(j)}$ if $\omega(j) = 1$ and $a_j \in \widetilde{\A_{2, \chi(j)}}$ if $\omega(j) = 2$. For each block $V = \{v_1 < \cdots < v_s\}$ of $\pi_{\omega, \chi}$, let $\lambda_V$ be a complex root of the polynomial 
\[
\varphi_2((a_{v_1} - z)\cdots (a_{v_s} - z)) = \varphi'((a_{v_1} - z)\cdots (a_{v_s} - z)) = \varphi''((a_{v_1} - z)\cdots (a_{v_s} - z)).
\]
For each $j \in \{1, \dots, n\}$, define $a_j^\circ$ as follows: if $V$ is the block of $\pi_{\omega, \chi}$ containing $j$ then $a_j^\circ = a_j$ if $\omega(V) = 1$ and $a^\circ_j = a_j - \lambda_V$ (in the unitization) if $\omega(V) = 2$.  Therefore we obtain that
\[
\varphi''(a_1^\circ \cdots a^\circ_n) = 0 = \varphi'(a^\circ_1\cdots a^\circ_n).
\]
Hence, as
\[
\varphi'(a_1^\circ \cdots a^\circ_n) = \varphi'(a_1\cdots a_n) + L \qand \varphi''(a_1^\circ \cdots a^\circ_n) = \varphi''(a_1\cdots a_n) + L
\]
where $L$ are lower-order terms that are equal by the inductive hypothesis, the result follows.
\end{proof}

Using Lemma \ref{lem:BiMonoFormula}, it is fairly straightforward to verify the associativity of (type I) bi-monotonic product.

\begin{proof}[Proof of Theorem \ref{Associativity}]
Let $(\A_{1, \ell}, \A_{1, r})$, $(\A_{2, \ell}, \A_{2, r})$, and $(\A_{3, \ell}, \A_{3, r})$ be three pairs of algebras with linear functionals $\varphi_k: \A_{k, \ell} \sqcup \A_{k, r} \to \bC$. Denote
\[
\A := ((\A_{1, \ell} \sqcup \A_{1, r}) \sqcup (\A_{2, \ell} \sqcup \A_{2, r})) \sqcup (\A_{3, \ell} \sqcup \A_{3, r}) \simeq (\A_{1, \ell} \sqcup \A_{1, r}) \sqcup ((\A_{2, \ell} \sqcup \A_{2, r}) \sqcup (\A_{3, \ell} \sqcup \A_{3, r}))
\]
under the natural identification, and let $\varphi'$ and $\varphi''$ be the linear functionals on $\A$ defined by
\[
\varphi' = (\varphi_1 \bim \varphi_2) \bim \varphi_3 \qand \varphi'' = \varphi_1 \bim (\varphi_2 \bim \varphi_3).
\]
It suffices to show that $\varphi' = \varphi''$.

To see this, let $n \geq 1$, $\chi: \{1, \dots, n\} \to \{\ell, r\}$, $\omega: \{1, \dots, n\} \to \{1, 2, 3\}$, and $a_1, \dots, a_n \in \A$ with $a_j \in \A_{\omega(j), \chi(j)}$ arbitrary.   Consider $\pi_{\chi, \omega}$ and let $V_1, \dots, V_m$ denote the ordered blocks $\pi_{\chi, \omega}$ such that each $V_k$ is a $\chi$-interval, $\max_{\prec_\chi}(V_k) \prec_\chi \min_{\prec_\chi}(V_{k + 1})$, $\omega$ is constant on each $V_k$, and $\omega(V_k) \neq \omega(V_{k + 1})$ for all $1 \leq k \leq m - 1$.  Define two $\chi$-intervals $V_j, V_k$ with $j < k$ and $\omega(V_j) = \omega(V_k) = 2$ to be equivalent, denoted $V_j \sim V_k$, if $\omega(x) \in \{2, 3\}$ for all $x$ such that
\[
\max_{\prec_\chi}(V_j) \prec_\chi x \prec_\chi \min_{\prec_\chi}(V_{k}).
\]
Let $Z$ denote a set of representatives from each equivalence classes under $\sim$ and let
\[
Y = \left\{\left. \bigcup_{V_k \sim V} V_k \, \right| \, V \in Z\right\}.
\]
Furthermore let
\[
W = \{j \in \{1, \ldots, n\} \, \mid \, \omega(j) = 1\}.
\] 
We claim that
\[
\varphi'(a_1 \cdots a_n) =  \varphi(a_W) \prod_{V' \in Y} \varphi(a_{V'}) \prod_{\substack{V \in \pi_{\chi, \omega} \\ \omega(V) = 3}} \varphi(a_V)  = \varphi''(a_1 \cdots a_n)
\]
(where empty products are 1 and the functional applied to an empty sequence is also 1).

To see this, first notice the definition of $\varphi'$ and Lemma \ref{lem:BiMonoFormula} imply that
\begin{align*}
\varphi'(a_1 \cdots a_n) &= \varphi(a_{W'}) \prod_{\substack{V \in \pi_{\chi, \omega} \\ \omega(V) = 3}} \varphi(a_V)
\end{align*}
where $W' = \{k \in \{1,\ldots, n\} \, \mid \, \omega(k) \neq 3\}$.  However, considering $\pi_{\chi|_{W'}, \omega|_{W'}}$, it is elementary to see that if $V'_1, \dots, V'_{m'}$ are the ordered blocks of $\pi_{\chi|_{W'}, \omega|_{W'}}$ such that each $V'_k$ is a $\chi|_{W'}$-interval, $\max_{\prec_\chi}(V'_k) \prec_\chi \min_{\prec_\chi}(V'_{k + 1})$, $\omega$ is constant on each $V'_k$, and $\omega(V'_k) \neq \omega(V'_{k + 1})$ for all $1 \leq k \leq m' - 1$, then
\[
\{V'_k \, \mid \, \omega(V'_k) = 2\} = Y.
\]
Hence
\[
\varphi(a_{W'})  = \varphi(a_W) \prod_{V' \in Y} \varphi(a_{V'})
\]
so the desired formula holds for $\varphi'(a_1 \cdots a_n)$.

On the other hand, consider $\omega'' : \{1,\ldots, n\} \to \{1,4\}$, where 
\[
\omega''(k) = \begin{cases}
1 & \text{if } \omega(k) = 1 \\
4 & \text{if } \omega(k) \in \{2,3\}
\end{cases}.
\]
Hence the definition of $\varphi''$ and Lemma \ref{lem:BiMonoFormula} imply that
\begin{align*}
\varphi''(a_1 \cdots a_n) &= \varphi(a_{W}) \prod_{\substack{V \in \pi_{\chi, \omega''} \\ \omega(V) = 4}} \varphi(a_V).
\end{align*}
However, it is not difficult to see that each $V \in \pi_{\chi, \omega''}$ is a union of $V_k$ such that $\omega(V_k) = 3$ and an element from $Y$.  Hence
\[
\prod_{\substack{V \in \pi_{\chi, \omega''} \\ \omega(V) = 4}} \varphi(a_V) = \prod_{V' \in Y} \varphi(a_{V'}) \prod_{\substack{V \in \pi_{\chi, \omega} \\ \omega(V) = 3}}\varphi(a_V)
\]
as desired.
\end{proof}

As a direct corollary of the associativity, we obtain a characterization of (type I) bi-monotonic independence of the same flavour as that of monotonic independence.
\begin{cor}
\label{cor:bi-monotone-type-I-looks-like-monotone}
Let $K$ be a set equipped with a linear order $<$ and let $\{(\A_{k, \ell}, \A_{k, r})\}_{k \in K}$  be pairs of algebras in a non-commutative space $(\A, \varphi)$ which are bi-monotonically independent with respect to $\varphi$.   Let $n \geq 1$, $\chi: \{1, \dots, n\} \to \{\ell, r\}$, $\omega: \{1, \dots, n\} \to K$, and $a_1, \dots, a_n \in \A$ be such that $a_j \in \A_{\omega(j), \chi(j)}$.  Let $V_1, \dots, V_m$ denote the ordered blocks $\pi_{\chi, \omega}$ such that each $V_k$ is a $\chi$-interval, $\max_{\prec_\chi}(V_k) \prec_\chi \min_{\prec_\chi}(V_{k + 1})$, $\omega$ is constant on each $V_k$, and $\omega(V_k) \neq \omega(V_{k + 1})$ for all $1 \leq k \leq m - 1$.   If $k$ is such that $\omega(V_{k-1}) < \omega(V_k) > \omega(V_{k+1})$ (where one inequality is eliminated if $k = 1$ or $k = n$), then
\[
\varphi(a_1 \cdots a_n) = \varphi(a_{V_k}) \varphi(a_W)
\]
where $W = \{1,\ldots, n\} \setminus V_k$.
\end{cor}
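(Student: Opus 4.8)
The plan is to induct on the number $N$ of distinct values taken by $\omega$, using the two--pair formula of Lemma~\ref{lem:BiMonoFormula} to peel off the largest value at each step. Since the pairs indexed by elements of $K$ not in $\omega(\{1,\dots,n\})$ contribute nothing to $\varphi(a_1\cdots a_n)$, I may assume $K=\omega(\{1,\dots,n\})$ is finite; write $q=\max K$ and $p=\omega(V_k)$. The base case $N=1$ is vacuous: then $m=1$, $V_1=\{1,\dots,n\}$, $W=\emptyset$, and the asserted identity reads $\varphi(a_1\cdots a_n)=\varphi(a_1\cdots a_n)$.

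For the inductive step I would first use associativity (Theorem~\ref{Associativity}) to regroup the linearly ordered family: the pair $\fB_1:=\big(\sqcup_{k'<q}\A_{k',\ell},\ \sqcup_{k'<q}\A_{k',r}\big)$, equipped with $\Psi:=\bim_{k'<q}\varphi|_{\A_{k',\ell}\sqcup\A_{k',r}}$, together with the pair $\fB_2:=(\A_{q,\ell},\A_{q,r})$, equipped with $\varphi|_{\A_{q,\ell}\sqcup\A_{q,r}}$, is bi-monotonically independent with respect to $\varphi$ in the order $\fB_1<\fB_2$; moreover $\Psi$ coincides with $\varphi$ on the algebra generated by $\fB_1$, and the subfamily $\{(\A_{k',\ell},\A_{k',r})\}_{k'<q}$ is itself bi-monotonically independent with respect to $\varphi$. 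Put $\sigma\colon\{1,\dots,n\}\to\{1,2\}$ equal to $2$ on $\{j:\omega(j)=q\}$ and $1$ elsewhere, and set $W_1=\{j:\omega(j)<q\}$. Then Lemma~\ref{lem:BiMonoFormula} applied to $a_1\cdots a_n$, and then to the subword $a_W$ (which lies in the same two pairs of algebras), gives
\[
\varphi(a_1\cdots a_n)=\varphi(a_{W_1})\prod_{\substack{V\in\pi_{\chi,\sigma}\\ \sigma(V)=2}}\varphi(a_V)
\qqand
\varphi(a_W)=\varphi(a_{W_1\setminus V_k})\prod_{\substack{V'\in\pi_{\chi|_W,\sigma|_W}\\ \sigma(V')=2}}\varphi(a_{V'}).
\]

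It then remains to match the two right-hand sides, splitting on whether $p=q$ or $p<q$. If $p=q$, then $V_k$ is itself one of the value-$2$ blocks of $\pi_{\chi,\sigma}$, deleting $V_k$ leaves the other value-$2$ blocks intact, and substituting the second identity into the first (with $W_1\setminus V_k=W_1$) yields $\varphi(a_1\cdots a_n)=\varphi(a_{V_k})\varphi(a_W)$. If $p<q$, then $V_k\subseteq W_1$, the set $V_k$ is still a block of $\pi_{\chi|_{W_1},\omega|_{W_1}}$ and is still a peak there, so the induction hypothesis (for $N-1$ values) applies to $a_{W_1}$ viewed as a word in $\{(\A_{k',\ell},\A_{k',r})\}_{k'<q}$ and gives $\varphi(a_{W_1})=\varphi(a_{V_k})\varphi(a_{W_1\setminus V_k})$; since $V_k$ is now a value-$1$ block of $\pi_{\chi,\sigma}$, the two displayed identities again combine to $\varphi(a_1\cdots a_n)=\varphi(a_{V_k})\varphi(a_W)$.

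The main obstacle is the combinatorial bookkeeping hidden in the last paragraph: one must verify that $V_k$ persists as a block of $\pi_{\chi|_{W_1},\omega|_{W_1}}$ and remains a peak there when $p<q$, and that the family of value-$2$ blocks of $\pi_{\chi,\sigma}$ is undisturbed by deleting $V_k$ (apart from dropping $V_k$ itself when $p=q$). All of this rests on the single observation that $V_1,\dots,V_m$ are exactly the successive $\prec_\chi$-intervals on which $\omega$ is constant, so that the $\prec_\chi$-neighbours of $V_k$ are $\max_{\prec_\chi}(V_{k-1})$ and $\min_{\prec_\chi}(V_{k+1})$ (one of them absent at an end), combined with the fact that $\omega(V_{k-1}),\omega(V_{k+1})<p\le q$, so the value-$q$ indices avoid $V_{k-1}\cup V_k\cup V_{k+1}$ and their deletion does not merge $V_{k-1},V_k,V_{k+1}$. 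A more direct but messier alternative would bypass the induction entirely and argue straight from the c-bi-free characterization of Theorem~\ref{CBFChar}, but the inductive route keeps the partition combinatorics localized.
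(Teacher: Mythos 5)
Your argument is correct, and it rests on the same two ingredients as the paper's proof --- associativity of the bi-monotonic product (Theorem \ref{Associativity}) plus the two-pair moment formula of Lemma \ref{lem:BiMonoFormula} --- but you deploy them differently, and the paper's deployment is noticeably more economical. The paper makes a single, better-chosen cut: it groups the algebras into $\B_{1,\ep}$ generated by $\{\A_{i,\ep} : i < \omega(V_k)\}$ and $\B_{2,\ep}$ generated by $\{\A_{i,\ep} : i \geq \omega(V_k)\}$ for $\ep \in \{\ell, r\}$. With this split the peak hypothesis $\omega(V_{k-1}) < \omega(V_k) > \omega(V_{k+1})$ guarantees that $V_k$ is itself a maximal group-$2$ $\chi$-interval of the coarsened word, so one application of Lemma \ref{lem:BiMonoFormula} to $a_1\cdots a_n$ and one to $a_W$ immediately give the factorization, with no induction and no case split. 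You instead cut at the global maximum $q=\max\omega(\{1,\dots,n\})$ and induct on the number of values of $\omega$, which forces the $p=q$ versus $p<q$ dichotomy and the bookkeeping about which blocks survive deletion of the value-$q$ indices; all of that checks out (the key points being that $\omega(V_{k\pm 1})<p$ keeps $V_k$ a peak block of $\pi_{\chi|_{W_1},\omega|_{W_1}}$ and prevents any merging of group-$2$ blocks when $V_k$ is removed), but it is work the paper's choice of split point avoids. One small imprecision: your closing claim that ``the value-$q$ indices avoid $V_{k-1}\cup V_k\cup V_{k+1}$'' fails when $p=q$, since then $V_k$ itself consists of value-$q$ indices; this does not damage the argument, because in that case you only need $V_{k\pm 1}$ to lie in group $1$, but the summary sentence should be split by case to be accurate.
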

\begin{proof} 
Let $\B_{1,\ep}$ and $\B_{2,\ep}$ be the subalgebras of $\A$ generated by $\{\A_{i,\ep}: i < \omega(V_k)\}$ and $\{\A_{i,\ep}: i \geq \omega(V_k)\}$ for $\ep\in\{\ell,r\}$, respectively. The associativity of bi-monotonic product implies that the pairs $(\B_{1,\ell},\B_{1,r})$ and $(\B_{2,\ell},\B_{2,r})$ are bi-monotonically independent in this order, and then Lemma \ref{lem:BiMonoFormula} implies our desired formula. 
\end{proof}

\begin{exam}[Bi-monotonic product does not preserve states] \label{not-state} Unfortunately, the bi-monotonic product of states need not be a state.  To see this, begin by supposing that $a_1,a_2,a_3$ are elements of a non-commutative space $(\A,\varphi)$ satisfying bi-monotonic independence prescribed by $(\omega(1),\omega(2),\omega(3)) = (2, 1, 2)$ and $(\chi(1),\chi(2),\chi(3)) = (\ell, \ell, r)$ under the notation of Corollary \ref{cor:bi-monotone-type-I-looks-like-monotone}.  Then $V_1 = \{1\}, V_2 = \{2\},V_3 = \{3\},$ and so 
$$
\varphi(a_1 a_2 a_3) = \varphi(a_1) \varphi(a_2) \varphi(a_3).
$$
In the case that $\A$ is a unital $^*$-algebra, $\varphi$ is a state (or self-adjoint) and $a_1, a_2, a_3$ are self-adjoint, the right-hand-side of the above equation is real so that
$$
\varphi(a_1 a_2 a_3) = \overline{\varphi(a_1 a_2 a_3)} = \varphi( (a_1 a_2 a_3)^* ) = \varphi(a_3 a_2 a_1).
$$
Defining $b_1=a_3, b_2=a_2$ and $b_3=a_1$ then shows that $(b_1,b_2,b_3)$ satisfies the bi-monotonic independence prescribed by $(\chi'(1),\chi'(2),\chi'(3))=(r,\ell,\ell)$ and $(\omega'(1),\omega'(2),\omega'(3)) = (2,1,2)$, and hence $V_1' = \{2\},
V_2' = \{1,3\}$. Then we have 
$$
\varphi(a_3 a_2 a_1) = \varphi(a_2) \varphi(a_3 a_1).
$$
Combining the above formulas, if $\varphi(a_2)\neq 0$ then we obtain that
$$
\varphi(a_3 a_1) = \varphi(a_1)\varphi(a_3), 
$$
which is also equal to $\varphi(a_1 a_3)$ by taking the complex conjugate. 
The above arguments show that, if $(\A_{1,\ell},\A_{1,r})$ and $(\A_{2,\ell},\A_{2,r})$ are bi-monotonically independent (in this order) pairs of $^*$-subalgebras in a non-commutative probability space $(\A,\varphi)$, then the identity $\varphi(a b) = \varphi( b a)=\varphi(a)\varphi(b)$ holds for all $a \in \A_{2,\ell}$ and $b\in \A_{2,r}$ unless $\varphi|_{\A_{1,\ell}}=0$. Since our definition of bi-monotonic product is universal, the bi-monotonic product does not preserve states in general. 
\end{exam}

The above example does not immediately preclude the possibility of a bi-monotonic convolution for measures.  Indeed recall that a commuting pair of self-adjoint elements $(a,b)$ in a C$^*$-non-commutative probability space $(\A,\varphi)$ has a compactly supported probability distribution $\mu$ on $\bR^2$, namely
$$
\varphi(a^m b^n) = \int_{\bR^2} s^m t^n\,d\mu(s,t), \qquad m,n \in \bN\cup\{0\}. 
$$
Thus the question of whether there is a bi-monotonic convolution for measures reduces to the question of whether $(a_1+a_2,b_1+b_2)$ has a probability distribution with respect to the bi-monotonic product of the states corresponding to $(a_1, b_1)$ and $(a_2,b_2)$.  The techniques of Example \ref{not-state} break down as it is possible to verify that the bi-monotonic product of the states corresponding to $(a_1, b_1)$ and $(a_2,b_2)$ is self-adjoint on the $*$-algebra generated by $(a_1+a_2,b_1+b_2)$.  The question of the existence of a bi-monotonic convolution for measures is answered in the negative in Section \ref{sec:analytic}.

To conclude this subsection, we note several reasons why type I bi-monotonic independence appears to be the more desired bi-probability analogue of monotonic independence.  First, Corollary \ref{cor:bi-monotone-type-I-looks-like-monotone} looks exactly like the definition of monotonic independence given in the introduction modulo a permutation.  This is consistent with the theories of bi-free independence and bi-Boolean independence.  Furthermore, as in these other bi-probability theories, Corollary \ref{cor:bi-monotone-type-I-looks-like-monotone} directly implies if $\{(\A_{k, \ell}, \A_{k, r})\}_{k \in K}$ are type I bi-monotonically independent, then the left algebras $\{\A_{k, \ell}\}_{k \in K}$ are monotonically independent, the right algebras $\{\A_{k, r}\}_{k \in K}$ are monotonically independent, and $A_{k, \ell}$ and $A_{j, r}$ are classically independent (that is, they commute in distribution and the moment of a product is the product of the moments) whenever $j \neq k$.

\subsection{The additive bi-monotonic convolution}\label{sec:convolution}

We conclude this section by returning to the Equation \eqref{eqn:biMonCauchy} produced by operator-valued monotonic independence. Since we do not have positivity we will treat transforms as formal power series. For example, for a pair $(a,b)$ in a non-commutative space $(\A,\varphi)$ we define 
$$
G_{(a,b)}(z,w) = \sum_{m,n=0}^\infty \frac{\varphi(a^m b^n)}{z^{m+1}w^{n+1}}
$$  
as formal power series (but of course corresponds to an analytic function away from $(0,0)$ when $\varphi$ is a state on the C$^*$-algebra generated by $(a,b)$).

Recall as shown in \cite{GS2016}*{Corollary 5.7} that for a pair $(a, b)$ in a double non-commutative space $(\A, \varphi, \psi)$, the reduced c-bi-free partial $R$-transform of $(a, b)$ is given by 
\[\widetilde{R}^{\mathrm{c}}_{(a, b)}(z, w) = \frac{zwG^\varphi_{(a, b)}(K_a^\psi(z), K_b^\psi(w))}{G_a^\varphi(K_a^\psi(z))G_b^\varphi(K_b^\psi(w))G_{(a, b)}^\psi(K_a^\psi(z), K_b^\psi(w))} - \frac{zw}{G_{(a, b)}^\psi(K_a^\psi(z), K_b^\psi(w))}
\]
as formal power series (by which we mean multiplying both sides by a common denominator), where the superscript $\varphi$ or $\psi$ indicates the linear functional with respect to which the corresponding transform is defined. The function $\widetilde{R}^{\mathrm{c}}_{(a, b)}$ linearizes the additive c-bi-free convolution in the sense that
\[\widetilde{R}^{\mathrm{c}}_{(a_1 + a_2, b_1 + b_2)}(z, w) = \widetilde{R}^{\mathrm{c}}_{(a_1, b_1)}(z, w) + \widetilde{R}^{\mathrm{c}}_{(a_2, b_2)}(z, w)\]
if $(a_1, b_1)$ and $(a_2, b_2)$ are c-bi-free with respect to $(\varphi, \psi)$.

Let $(a_1, b_1)$ and $(a_2, b_2)$ be pairs of elements in a non-commutative space $(\A, \varphi)$ which are bi-monotonically independent with respect to $\varphi$.  Let $\varphi_k = \varphi|_{\alg(a_k, b_k)}$ for $k \in \{1, 2\}$. Since the bi-monotonic product $\varphi_1 \bim \varphi_2$ is defined to be the restriction of the first coordinate of the c-bi-free product $(\widetilde{\varphi_1}, \delta_1) *\!* (\widetilde{\varphi_2}, \widetilde{\varphi_2})$, and since the transform $\widetilde{R}^{\mathrm{c}}_{(a_1, b_1)}$ with respect to $(\widetilde{\varphi_1}, \delta_1)$ is given by
\[
\widetilde{R}^{\mathrm{c}}_{(a_1, b_1)}(z, w) = \frac{G_{(a_1, b_1)}^\varphi(1/z, 1/w)}{G_{a_1}^\varphi(1/z)G_{b_1}^\varphi(1/w)} - 1,
\]
the transform $\widetilde{R}^{\mathrm{c}}_{(a_2, b_2)}$ with respect to $(\widetilde{\varphi_2}, \widetilde{\varphi_2})$ is given by
\[
\widetilde{R}^{\mathrm{c}}_{(a_2, b_2)}(z, w) = 1 - \frac{zw}{G_{(a_2, b_2)}^\varphi(K_{a_2}^\varphi(z), K_{b_2}^\varphi(w))},
\]
and the transform $\widetilde{R}^{\mathrm{c}}_{(a_1 + a_2, b_1 + b_2)}$ with respect to $(\widetilde{\varphi_1}, \delta_1) *\!* (\widetilde{\varphi_2}, \widetilde{\varphi_2})$ is given by
\[
\widetilde{R}^{\mathrm{c}}_{(a_1 + a_2, b_1 + b_2)}(z, w) = \frac{zwG^\varphi_{(a_1 + a_2, b_1 + b_2)}(K_{a_2}^\varphi(z), K_{b_2}^\varphi(w))}{G_{a_1 + a_2}^\varphi(K_{a_2}^\varphi(z))G_{b_1 + b_2}^\varphi(K_{b_2}^\varphi(w))G^\varphi_{(a_2, b_2)}(K_{a_2}^\varphi(z), K_{b_2}^\varphi(w))} - \frac{zw}{G_{(a_2, b_2)}^\varphi(K_{a_2}^\varphi(z), K_{b_2}^\varphi(w))},
\]
(as $K^\psi_{a_1 + a_2}(z) = K^{\delta_1}_{a_1}(z) + K^\varphi_{a_2}(z) = K^\varphi_{a_2}(z)$ and similarly $K^\psi_{b_1 + b_2}(w) = K^\varphi_{b_2}(w)$), we have that
\[\frac{zwG^\varphi_{(a_1 + a_2, b_1 + b_2)}(K_{a_2}^\varphi(z), K_{b_2}^\varphi(w))}{G_{a_1 + a_2}^\varphi(K_{a_2}^\varphi(z))G_{b_1 + b_2}^\varphi(K_{b_2}^\varphi(w))G^\varphi_{(a_2, b_2)}(K_{a_2}^\varphi(z), K_{b_2}^\varphi(w))} = \frac{G_{(a_1, b_1)}^\varphi(1/z, 1/w)}{G_{a_1}^\varphi(1/z)G_{b_1}^\varphi(1/w)}\]
by the additivity of the reduced c-bi-free $R$-transform. Since
\[\frac{1}{G^\varphi_{a_1 + a_2}(K^\varphi_{a_2}(z))} = F_{a_1 + a_2}^\varphi(K^\varphi_{a_2}(z)) = F^\varphi_{a_1}(F^\varphi_{a_2}(K^\varphi_{a_2}(z))) = \frac{1}{G^\varphi_{a_1}(1/z)},\]
and similarly $1/G^\varphi_{b_1 + b_2}(K^\varphi_{b_2}(w)) = 1/G^\varphi_{b_1}(1/w)$, we have
\[G^\varphi_{(a_1 + a_2, b_1 + b_2)}(K_{a_2}^\varphi(z), K_{b_2}^\varphi(w)) = \frac{1}{zw}G_{(a_1, b_1)}^\varphi(1/z, 1/w)G^\varphi_{(a_2, b_2)}(K_{a_2}^\varphi(z), K_{b_2}^\varphi(w)).\]
Replacing $z$ by $G_{a_2}^\varphi(z)$ and $w$ by $G_{b_2}^\varphi(w)$ produces
\[G^\varphi_{(a_1 + a_2, b_1 + b_2)}(z, w) = G_{(a_1, b_1)}^\varphi(F^\varphi_{a_2}(z), F^\varphi_{b_2}(w))G^\varphi_{(a_2, b_2)}(z, w)F^\varphi_{a_2}(z)F^\varphi_{b_2}(w),\]
which is exactly Equation \eqref{eqn:biMonCauchy} as predicted.

\subsection{Bi-monotonic independence of type II}

In \cite{M2000}*{Section 2}, Muraki presented a monotonic product which naturally realizes monotonic independence. While studying Muraki's construction, we noticed that there is also a choice of a left and a right representation of operators of the initial spaces on the monotonic product space like Voiculescu's bi-free construction in \cite{V2014}*{Section 1}. Thus a notion of bi-monotonic independence (of type II) arises when considering the left and right representations simultaneously, which we discuss as follows, starting with Muraki's monotonic product construction.  We note this was independently discovered and studied in \cite{G2017}.

Let $\{(\X_k, \X_k^\circ, \xi_k)\}_{k \in K}$ be a linearly ordered family where, for each $k \in K$, $\X_k$ is a vector space, $\X_k^\circ \subset \X_k$ is a subspace of co-dimension $1$, and $0 \neq \xi_k \in \X_k$ is a vector such that $\X_k = \bC\xi_k \oplus \X_k^\circ$. For each triple $(\X_k, \X_k^\circ, \xi_k)$, define a linear functional $\varphi_k: \X_k \to \bC$ such that $\varphi_k(x)\xi_k = \mathfrak{p}_k(x)$ where $\mathfrak{p}_k$ is the projection of $\X$ onto $\bC \xi_k$ with respect to the direct sum   $\X_k = \bC\xi_k \oplus \X_k^\circ$.

Given a vector space $\X$, let $\L(\X)$ denote the algebra of linear operators on $\X$. Given $(\X, \X^\circ, \xi)$ as above, there is a linear functional $\varphi_\xi: \L(\X) \to \bC$ defined by $\varphi_\xi(T) = \varphi(T\xi)$ for all $T \in \L(\X)$. Since $\varphi_\xi(I) = \varphi(I\xi) = \varphi(\xi) = 1$ for the identity operator $I$ on $\X$, the pair $(\L(\X), \varphi_\xi)$ is a non-commutative space.

Given a linearly ordered family $\{(\X_k, \X_k^\circ, \xi_k)\}_{k \in K}$ of vector spaces with specified vectors, the \textit{monotonic product} $(\X, \X^\circ, \xi) = \rhd_{k \in K}(\X_k, \X_k^\circ, \xi_k)$ is the vector space
\[
\X = \bC\xi \oplus \X^\circ\quad\text{with}\quad\X^\circ = \bigoplus_{n \geq 1}\left(\bigoplus_{k_1 > \cdots > k_n}\X_{k_1}^\circ \otimes \cdots \otimes \X_{k_n}^\circ\right).
\]

For every $k \in K$ and $\sharp \in \{=, <, >\}$ let
\begin{align*}
\X^\circ_{(\ell, \sharp k)} &= \bigoplus_{n \geq 1}\left(\bigoplus_{\substack{k_1 > \cdots > k_n\\k_1 \sharp k}}\X_{k_1}^\circ \otimes \cdots \otimes \X_{k_n}^\circ\right) \qqand 
\X^\circ_{(r, \sharp k)} = \bigoplus_{n \geq 1}\left(\bigoplus_{\substack{k_1 > \cdots > k_n\\k_n \sharp k}}\X_{k_1}^\circ \otimes \cdots \otimes \X_{k_n}^\circ\right).
\end{align*}
Writing
\[
\X_k = \bC\xi_k \oplus \X_k^\circ \qqand
\X_{(\ell, < k)}  = \bC\xi_{(\ell, < k)} \oplus \X_{(\ell, < k)}^\circ
\]
and using the decomposition
\[
\X = \bC\xi \oplus \X^\circ_{(\ell, = k)} \oplus \X^\circ_{(\ell, < k)} \oplus \X^\circ_{(\ell, > k)} = \bC\xi \oplus \X_k^\circ \oplus (\X_k^\circ \otimes \X^\circ_{(\ell, < k)}) \oplus \X^\circ_{(\ell, < k)} \oplus \X^\circ_{(\ell, > k)},
\]
it was observed in \cite{M2000} that there are natural identifications
\[
V_k: (\X_k \otimes \X_{(\ell, < k)}) \oplus \X^\circ_{(\ell, > k)} \to \X
\]
given by
\[\xi_k \otimes \xi_{(\ell, < k)} \to \xi,\quad\X_k^\circ \otimes \xi_{(\ell, < k)} \to \X_k^\circ,\quad\xi_k \otimes \X_{(\ell, < k)}^\circ \to \X_{(\ell, < k)}^\circ.\]
Similarly, writing
\[
\X_{(r, > k)} = \bC\xi_{(r, > k)} \oplus \X_{(r, > k)}^\circ \qqand
\X_k  = \bC\xi_k \oplus \X_k^\circ 
\]
and using the decomposition
\[
\X = \bC\xi \oplus \X^\circ_{(r, = k)} \oplus \X^\circ_{(r, < k)} \oplus \X^\circ_{(r, > k)} = \bC\xi \oplus \X_k^\circ \oplus (\X^\circ_{(r, > k)} \otimes \X_k^\circ) \oplus \X^\circ_{(r, > k)} \oplus \X^\circ_{(r, < k)},\]
there are natural identifications
\[W_k: (\X_{(r, > k)} \otimes \X_k) \oplus \X_{(r, < k)}^\circ \to \X\]
given by
\[\xi_{(r, > k)} \otimes \xi_k \to \xi,\quad\xi_{(r, > k)} \otimes \X_k^\circ \to \X_k^\circ,\quad\X_{(r, > k)}^\circ \otimes \xi_k \to \X_{(r, > k)}^\circ.\]
Consequently, there are natural left and right representations
\[\lambda_k: \L(\X_k) \to \L(\X)\qand\rho_k: \L(\X_k) \to \L(\X)\]
by
\[\lambda_k(T) = V_k\left((T \otimes 1_{\X_{(\ell, < k)}}) \oplus 0_{\X^\circ_{(\ell, > k)}}\right)V_k^{-1}\qand\rho_k(T) = W_k\left((1_{\X_{(r, > k)}} \otimes T) \oplus 0_{\X_{(r, < k)}^\circ}\right)W_k^{-1}\]
for every $T \in \L(\X_k)$.

Notice for $k \in K$ and $T \in \L(\X_k)$, that
\[\lambda_k(T)\xi = \rho_k(T)\xi = \varphi_{\xi_k}(T)\xi \oplus (1 - \mathfrak{p}_k)T\xi_k,\]
so both $\lambda_k$ and $\rho_k$ satisfy $\varphi\circ \lambda_k = \varphi_{\xi_k}$ and $\varphi\circ \rho_k = \varphi_{\xi_k}$. Moreover, for $x_1 \otimes \cdots \otimes x_n \in \X_{k_1}^\circ \otimes \cdots \otimes \X_{k_n}^\circ$ with $k_1 > \cdots > k_n$, we have
\[
\lambda_k(T)(x_1 \otimes \cdots \otimes x_n) = \begin{cases}
0 &\text{if }\,k < k_1\\
(\varphi_{k}(Tx_1)x_2 \otimes \cdots \otimes x_n) \oplus ((1 - \mathfrak{p}_k)Tx_1 \otimes x_2 \otimes \cdots \otimes x_n) &\text{if }\,k = k_1\\
(\varphi_{\xi_k}(T)x_1 \otimes \cdots \otimes x_n) \oplus ((1 - \mathfrak{p}_k)T\xi_k \otimes x_1 \otimes \cdots \otimes x_n) &\text{if }\,k > k_1
\end{cases},
\]
and similarly
\[
\rho_k(T)(x_1 \otimes \cdots \otimes x_n) = \begin{cases}
(\varphi_{\xi_k}(T)x_1 \otimes \cdots \otimes x_n) \oplus (x_1 \otimes \cdots \otimes x_n \otimes (1 - \mathfrak{p}_k)T\xi_k) &\text{if }\,k < k_n\\
(\varphi_k(Tx_n)x_1 \otimes \cdots \otimes x_{n - 1}) \oplus (x_1 \otimes \cdots \otimes x_{n - 1} \otimes (1 - \mathfrak{p}_k)Tx_n) &\text{if }\,k = k_n\\
0 &\text{if }\,k > k_n
\end{cases},
\]
where the empty tensor product is $\xi$.

In view of the above construction, bi-monotonic independence (of type II) is defined as follows.

\begin{defn}
A linearly ordered family $\{(\A_{k, \ell}, \A_{k, r})\}_{k \in K}$ of pairs of algebras in a non-commutative space $(\A, \varphi)$ is said to be \textit{bi-monotonically independent} (of type II) with respect to $\varphi$ if there are a linearly ordered family $\{(\X_k, \X_k^\circ, \xi_k)\}_{k \in K}$ of vector spaces with specified vectors and homomorphisms
\[\ell_k: \A_{k, \ell} \to \L(\X_k) \qand r_k: \A_{k, r} \to \L(\X_k)\]
such that the joint distribution of $\{(\A_{k, \ell}, \A_{k, r})\}_{k \in K}$ with respect to $\varphi$ is equal to the joint distribution of the linearly ordered family
\[\{(\lambda_k \circ \ell_k(\A_{k, \ell}), \rho_k \circ r_k(\A_{k, r}))\}_{k \in K}\]
of pairs of algebras in $(\L(\X), \varphi_\xi)$ with respect to $\varphi_\xi$, where $(\X, \X^\circ, \xi) = \rhd_{k \in K}(\X_k, \X_k^\circ, \xi_k)$.
\end{defn}

Note that bi-monotonic independence (of type II) of $\{(\A_{k, \ell}, \A_{k, r})\}_{k \in K}$ implies monotonic independence of the left algebras $\{\A_{k, \ell}\}_{k \in K}$ and anti-monotonic independence of the right algebras $\{\A_{k, r}\}_{k \in K}$, where the first claim was shown in \cite{M2000}*{Theorem 2.1} and the second claim can be shown via a routine check similar to the proof of \cite{M2000}*{Theorem 2.1}.  Hence bi-monotonic independence of type II is different from bi-monotonic independence of type I as $\{\A_{k, r}\}_{k \in K}$ are monotonically independent in the latter.

On the other hand, unlike other independences for pairs of algebras (e.g., bi-free, bi-Boolean, or c-bi-free independences), if $\{(\A_{k, \ell}, \A_{k, r})\}_{k \in K}$ is bi-monotonically independent (of type II) with respect to $\varphi$, then a left algebra $\A_{k, \ell}$ and a right algebra $\A_{j, r}$ are in general not classically independent with respect to $\varphi$, unless $k > j$. Indeed, if $\{(\A_{k, \ell}, \A_{k, r})\}_{k \in K}$ are bi-monotonically independent of type II, $k,j \in K$ are such that $k < j$, $a \in \A_{k, \ell}$, and $b \in \A_{j, r}$ then it is easy via the above representations to verify that
\[
\varphi(abab) = \varphi(a)^2 \varphi(b)^2 \neq \varphi(a^2) \varphi(b^2) = \varphi(a^2b^2).
\]
Consequently, the type II notion does not lead to an additive convolution on probability measures on $\bR^2$ nor on finite signed measures on $\bR^2$.  Indeed the bi-monotonic convolution of type II of the pairs $(a, 0)$ and $(0, b)$ would be $(a,b)$ whose distribution cannot correspond to any form of measure on $\bR^2$ as $[a,b] \neq 0$ in distribution.

As previously mentioned, in Section \ref{sec:analytic} it will also be demonstrated that the bi-monotonic convolution of type I of probability measures need not be a probability measure.  It is unknown whether the same is true for finite signed measures. 

Throughout the rest, by bi-monotonic independence we refer to the type I sense.

\section{Bi-monotonic cumulants}\label{sec:cumulants}

In non-commutative probability theory, cumulants play an important role due to the vanishing characterization of the corresponding independence. In particular, given a non-commutative probability space $(\A, \varphi)$, the free cumulants are a family of multilinear functionals $\{\kappa_n: \A^n \to \bC\}_{n \geq 1}$ uniquely determined by the free moment-cumulant formula (see \cite{S1994} for details) with the property that a family $\{\A_k\}_{k \in K}$ of unital subalgebras of $\A$ is free with respect to $\varphi$ if and only if
\[
\kappa_n(a_1, \dots, a_n) = 0
\]
whenever $n \geq 2$, $a_j \in \A_{k_j}$, $k_j \in K$, and there exist $i$ and $j$ such that $k_i \neq k_j$.

In the pairs of algebras setting, Voiculescu demonstrated in \cite{V2014}*{Section 5} the existence of bi-free cumulants $\{\kappa_\chi:\A^n \to \bC\}_{n \geq 1, \chi: \{1, \dots, n\} \to \{\ell, r\}}$ which play the same role as free cumulants when it comes to bi-free independence. The explicit formula was conjectured in \cite{MN2015} and proved in \cite{CNS2015-1}.  In particular, a family $\{(\A_{k, \ell}, \A_{k, r})\}_{k \in K}$ of pairs of unital subalgebras of $\A$ is bi-free with respect to $\varphi$ if and only if for all $n \geq 2$, $\chi: \{1, \dots, n\} \to \{\ell, r\}$, $\omega: \{1, \dots, n\} \to K$, and $a_1, \dots, a_n \in \A$ with $a_j \in \A_{\omega(j), \chi(j)}$, we have that
\[
\kappa_\chi(a_1, \dots, a_n) = 0
\]
whenever $\omega$ is not constant. Note that although $\kappa_\chi(a_1, \dots, a_n)$ is defined for all $a_1, \dots, a_n \in \A$, when used to characterize bi-free independence it is assumed that the $j^{\mathrm{th}}$ argument comes from a left or a right algebra $\A_{\chi(j), \omega(j)}$ depending on whether $\chi(j) = \ell$ or $\chi(j) = r$. Consequently the assumption below that the $j^{\mathrm{th}}$ entry in our bi-monotonic cumulants hails from an algebra dictated by $\chi(j)$ is inconsequential.

Since monotonic independence is non-symmetric one cannot expect the existence of a family of cumulants with the vanishing characterization. However, using the associativity of monotonic independence and the dot operation, the monotonic cumulants were defined in \cite{HS2011-1} for a single random variable and extended in \cite{HS2011-2} to the multivariate case. Moreover, a general (monotonic) moment-cumulant formula was proved in \cite{HS2011-2}*{Theorem 5.3} in perfect analogy with other moment-cumulant formulae. We shall use a similar approach as in \cites{HS2011-1, HS2011-2} to define the bi-monotonic cumulants.

\subsection{The dot operation}

A crucial ingredient in defining the monotonic cumulants is the dot operation introduced in \cites{HS2011-1, HS2011-2}. Roughly speaking, if $a_1, \dots, a_n$ are random variables in a non-commutative probability space $(\A, \varphi)$, then for $N \geq 1$, $(N.a_1, \dots, N.a_n)$ denotes the tuple $(a_1^{(1)} + \cdots + a_1^{(N)}, \dots, a_n^{(1)} + \cdots + a_n^{(N)})$, where $\{a_1^{(i)}, \dots, a_n^{(i)}\}_{i  = 1}^N$ are identically distributed and monotonically independent with respect to $\varphi$. This can always be achieved by enlarging $(\A, \varphi)$ and using the monotonic product. Then the monotonic cumulants satisfy $K_n(N.a_1, \dots, N.a_n) = NK_n(a_1, \dots, a_n)$. In the pairs of algebras setting, we introduce a dot operation as follows.

Let $(\A, \varphi)$ be a non-commutative space and let $\A_\ell$ and $\A_r$ be subalgebras of $\A$. Take copies $\A^{(i)} = \A_{\ell} \sqcup \A_r$ and $\varphi^{(i)} = \varphi|_{\A^{(i)}}$ for $ i \geq1$, and let $\widetilde{\A} = \sqcup_{i\geq1}\A^{(i)}$ and $\widetilde{\varphi} = \bim_{ i \geq1}\varphi^{(i)}$. Then for $n \geq 1$, $\chi: \{1, \dots, n\} \to \{\ell, r\}$, $a_j \in \A_{\chi(j)}$ and for $N \geq1$, define $(N.a_1, \dots, N.a_n)$ to be the tuple $(a_1^{(1)} + \cdots + a_1^{(N)}, \dots, a_n^{(1)} + \cdots + a_n^{(N)})$, where $a_j^{(i)}$ denotes the element in $\A^{(i)}$ corresponding to $a_j$. By construction, for every $\chi: \{1, \dots, n\} \to \{\ell, r\}$, the two-faced families $\{(\{a_p^{(i)}\}_{p \in \chi^{-1}(\{\ell\})}, \{a_q^{(i)}\}_{q \in \chi^{-1}(\{r\})})\}_{i = 1}^\infty$ are identically distributed and bi-monotonically independent with respect to $\widetilde{\varphi}$. As with the monotonic case (see \cite{HS2011-2}*{Proposition 2.4}), this dot operation can be iterated more than once and  
\[\widetilde{\varphi}(M.(N.a_1)\cdots M.(N.a_n)) = \widetilde{\varphi}((MN).a_1\cdots(MN).a_n)\]
for $M, N \geq 1$ since the bi-monotonic product is associative. For simplicity, we also denote the functional $\widetilde \varphi$ by $\varphi$. 

Using the dot operation, the bi-monotonic cumulants are defined as follows.

\begin{defn}\label{BiMonCumulants}
Let $(\A_{\ell}, \A_{r})$ be a pair of subalgebras in a non-commutative space $(\A, \varphi)$. The \textit{bi-monotonic cumulants} with respect to $(\A_\ell,\A_r,\varphi)$ is the family of functionals
\[\mathcal{K} = \left\{K_\chi:\A_{\chi(1)} \times \cdots \times \A_{\chi(n)} \to \bC\right\}_{n \geq 1, \chi: \{1, \dots, n\} \to \{\ell, r\}}\]
which satisfy
\begin{enumerate}[$\quad(1)$]
\item $K_\chi$ is multilinear,

\item there exists a polynomial $Q_\chi$ such that
\[\varphi(a_1\cdots a_n) = K_{\chi}(a_1, \dots, a_n) + Q_\chi\left(\{K_{\chi|_V}((a_1, \dots, a_n)|_V)\,|\,\emptyset \neq V \subsetneq \{1, \dots, n\}\}\right),\]

\item $K_{\chi}(N.a_1, \dots, N.a_n) = NK_{\chi}(a_1, \dots, a_n)$,
\end{enumerate}
for all $n \geq 1$ and $\chi: \{1, \dots, n\} \to \{\ell, r\}$.
\end{defn}

By the same arguments as in the proof of \cite{HS2011-2}*{Theorem 3.1}, it can be shown that bi-monotonic cumulants (if they exist) are unique. Indeed, for $n \geq 1$ and $\chi: \{1, \dots, n\} \to \{\ell, r\}$, Condition $(1)$ of Definition \ref{BiMonCumulants} implies that the polynomial $Q_\chi$ has no constant or linear terms. Moreover, Condition $(3)$ of Definition \ref{BiMonCumulants} implies that
\begin{align*}
\varphi(N.a_1\cdots N.a_n) &= K_\chi(N.a_1, \dots, N.a_n) +  Q_\chi\left(\{K_{\chi|_V}((N.a_1, \dots, N.a_n)|_V)\,|\,\emptyset \neq V \subsetneq \{1, \dots, n\}\}\right)\\
&= NK_{\chi}(a_1, \dots, a_n) + Q_\chi\left(\{NK_{\chi|_V}((a_1, \dots, a_n)|_V)\,|\,\emptyset \neq V \subsetneq \{1, \dots, n\}\}\right)\\
&= NK_{\chi}(a_1, \dots, a_n) + N^2\widetilde{Q}_\chi\left(\{N\} \cup \{K_{\chi|_V}((a_1, \dots, a_n)|_V)\,|\,\emptyset \neq V \subsetneq \{1, \dots, n\}\}\right),
\end{align*}
for some polynomial $\widetilde{Q}_\chi$. If $\K'$ is another family of functionals satisfying the same three conditions, then there is a polynomial $\widetilde{Q}'_\chi$ such that
\[\varphi(N.a_1\cdots N.a_n) = NK'_{\chi}(a_1, \dots, a_n) + N^2\widetilde{Q}'_\chi\left(\{N\} \cup \{K'_{\chi|_V}((a_1, \dots, a_n)|_V)\,|\,\emptyset \neq V \subsetneq \{1, \dots, n\}\}\right),\]
and hence $K_{\chi}(a_1, \dots, a_n) = K'_{\chi}(a_1, \dots, a_n)$.

Note also that by the recursive use of Condition $(2)$ of Definition \ref{BiMonCumulants}, there exists a polynomial $R_\chi$ such that
\[K_\chi(a_1, \dots, a_n) = \varphi(a_1\cdots a_n) + R_\chi\left(\left\{\varphi(a_V)\,|\,\emptyset \neq V \subsetneq \{1, \dots, n\}\right\}\right),\]
where $R_\chi$ also has no constant or linear terms by Condition $(1)$ of Definition \ref{BiMonCumulants}. This can be used as an equivalent condition. For the existence of bi-monotonic cumulants, we need the following analogue of \cite{HS2011-2}*{Proposition 3.2}.

\begin{prop}\label{DotMoment}
Let $(\A_{\ell}, \A_{r})$ be a pair of subalgebras in a non-commutative space $(\A, \varphi)$. For $n \geq 1$, $\chi: \{1, \dots, n\} \to \{\ell, r\}$ and $a_1, \dots, a_n \in \A$ with $a_j \in A_{\chi(j)}$, the moment $\varphi(N.a_1\cdots N.a_n)$ is a polynomial in
\[\{N\} \cup \left\{\varphi(a_V)\,|\,\emptyset \neq V \subsetneq \{1, \dots, n\}\right\}\]
without a constant term with respect to $N$.
\end{prop}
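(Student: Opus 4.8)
The plan is to adapt the proof of \cite{HS2011-2}*{Proposition 3.2} to the two-faced setting, replacing the classical monotonic moment formula by the bi-monotonic one of Corollary \ref{cor:bi-monotone-type-I-looks-like-monotone}. First, by multilinearity of $\varphi$,
\[
\varphi(N.a_1 \cdots N.a_n) = \sum_{f\colon \{1,\dots,n\} \to \{1,\dots,N\}} \varphi\big(a_1^{(f(1))} \cdots a_n^{(f(n))}\big),
\]
where $a_j^{(i)}$ is the copy of $a_j$ in the $i$-th copy $\A^{(i)} = \A_\ell \sqcup \A_r$; by construction of the dot operation, the $i$-th copies $(\A_\ell^{(i)},\A_r^{(i)})$, $i\ge 1$, form a bi-monotonically independent family with respect to $\varphi$ (with $\bN$ carrying its usual order), and all copies are identically distributed.

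Next, I would fix $f$ and evaluate the corresponding summand by iterating Corollary \ref{cor:bi-monotone-type-I-looks-like-monotone} with $\omega=f$. If $V_1,\dots,V_m$ are the ordered blocks of $\pi_{\chi,f}$, then consecutive blocks carry distinct $f$-values, so any block attaining the largest $f$-value is a local maximum in the sense of the corollary; peeling it off expresses the summand as $\varphi(a_{V_k})$ times the analogous moment over $W=\{1,\dots,n\}\setminus V_k$, which is a strictly smaller instance of the same problem (its block partition $\pi_{\chi|_W,f|_W}$ is $\pi_{\chi,f}$ with $V_k$ deleted, the two neighbours of $V_k$ being merged precisely when their $f$-values agree). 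Iterating to exhaustion yields
\[
\varphi\big(a_1^{(f(1))} \cdots a_n^{(f(n))}\big) = \prod_{V \in \mathcal{B}_f} \varphi(a_V)
\]
for a partition $\mathcal{B}_f$ of $\{1,\dots,n\}$ determined by $\chi$, the level-set partition of $f$, and the order its values induce on that partition; moreover every $V \in \mathcal{B}_f$ is a proper subset of $\{1,\dots,n\}$ unless $f$ is constant, in which case $\mathcal{B}_f=\{\{1,\dots,n\}\}$ and the summand equals $\varphi(a_1\cdots a_n)$.

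To conclude I would regroup the sum according to the ``pattern'' of $f$ --- the level-set partition of $f$ together with the linear order its values induce on the blocks. By the previous step the summand depends only on the pattern (here identical distribution of the copies is used), and a pattern with $k$ blocks is realised by exactly $\binom{N}{k}$ maps $f\colon\{1,\dots,n\}\to\{1,\dots,N\}$; as a polynomial in $N$ this is $N(N-1)\cdots(N-k+1)/k!$, of degree $k\ge 1$ and with no constant term. Hence
\[
\varphi(N.a_1\cdots N.a_n) = \sum_{\text{patterns }(P,<)} \binom{N}{|P|}\,\prod_{V \in \mathcal{B}_f} \varphi(a_V),
\]
which is a polynomial in $N$ and the moments $\{\varphi(a_V)\}$ with no constant term in $N$; the full moment $\varphi(a_1\cdots a_n)$ occurs only through the single term $N\,\varphi(a_1\cdots a_n)$ coming from the constant pattern, while all other patterns contribute only proper sub-moments.

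The step I expect to be the main obstacle is the second one: justifying rigorously that the iteration of Corollary \ref{cor:bi-monotone-type-I-looks-like-monotone} is valid in the bi-monotonic setting --- that a local-maximum block always exists, that passing to $W$ genuinely reduces to a smaller instance (with the claimed description of $\pi_{\chi|_W,f|_W}$), and that the blocks $V\in\mathcal{B}_f$, though in general neither $\chi$-intervals nor contiguous subsets of $\{1,\dots,n\}$, still contribute legitimate moments $\varphi(a_V)$ --- together with the bookkeeping showing that $\mathcal{B}_f$, and hence the summand, depends on $f$ only through its pattern. Once this is in place, the count $\binom{N}{|P|}$ of maps with a given pattern and the final assembly are routine.
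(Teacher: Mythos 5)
Your argument is correct, but it follows a genuinely different route from the paper's. The paper proves the proposition by induction on $n$ combined with a telescoping argument in $N$: writing the $N$-fold sum as an $(N-1)$-fold sum plus one fresh copy and invoking the two-pair expansion of Lemma \ref{lem:FormulaTwoPairs} (whose cross terms $S_\chi$ involve only proper submoments), one obtains a first-difference identity $\varphi((N+1).a_1\cdots(N+1).a_n)-\varphi(N.a_1\cdots N.a_n)=\varphi(a_1\cdots a_n)+S_\chi(\cdots)$; summing over $N$ and using that $1^p+2^p+\cdots+(N-1)^p$ is a polynomial in $N$ without constant term closes the induction. You instead expand $\varphi(N.a_1\cdots N.a_n)$ over all maps $f\colon\{1,\dots,n\}\to\{1,\dots,N\}$, evaluate each summand by iteratively peeling off a value-maximal block via Corollary \ref{cor:bi-monotone-type-I-looks-like-monotone}, and count the maps realizing a given pattern by $\binom{N}{k}$. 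Your route buys an explicit closed formula --- essentially a precursor of the bi-monotonic moment--cumulant formula of Theorem \ref{thm:moment-cumulant} --- at the price of exactly the bookkeeping you flag: one must check that the iterated peeling is well defined, that $\mathcal{B}_f$ depends on $f$ only through its pattern, and that each block of $\mathcal{B}_f$ lies inside a single level set of $f$ (this holds because neighbouring blocks are merged only when their $f$-values agree, so identical distribution of the copies lets you drop the superscripts); the paper's induction sidesteps all of this. One small point that applies equally to both proofs: the constant pattern contributes the term $N\,\varphi(a_1\cdots a_n)$, so the full moment $V=\{1,\dots,n\}$ does occur; the $\subsetneq$ in the statement should be read as allowing $V=\{1,\dots,n\}$, which is also how the paper's own proof concludes.
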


\begin{proof}
We proceed by induction on $n$ where the base case $n = 1$ is clear. For simplicity, we refer to $\left\{\varphi(a_V)\,|\,\emptyset \neq V \subsetneq \{1, \dots, n\}\right\}$ as the set of submoments. Note first that if $(\A'_\ell, \A'_r)$ and $(\A''_\ell,\A''_r)$ are bi-monotonically independent with respect to $\varphi$, then for $n \geq 1$, $\chi: \{1, \dots, n\} \to \{\ell, r\}$ and $a_1', \dots, a_n', a_1'', \dots, a_n'' \in \A$ with $a_j' \in \A'_{\chi(j)}$ and $a_j'' \in \A''_{\chi(j)}$, we have
\[\varphi((a'_1 + a''_1)\cdots(a'_n + a''_n)) = \varphi(a_1'\cdots a_n') + \varphi(a_1''\cdots a_n'') + S_\chi\left(\left\{\varphi(a'_V), \varphi(a''_V)\,|\,\emptyset \neq V \subsetneq \{1, \dots, n\}\right\}\right)\]
for some universal polynomial $S_\chi$ without a constant term (an explicit formula for $S_\chi$ is given in Lemma \ref{lem:FormulaTwoPairs}). Therefore, taking $a_i'= a_i^{(1)} + \cdots + a_i^{(N-1)}$ and $a_i''=a_i^{(N)}$, we have 
\begin{align*}
&\varphi((N + 1).a_1\cdots(N + 1).a_n) - \varphi(N.a_1\cdots N.a_n)\\
&= \varphi(a_1\cdots a_n) + S_\chi\left(\left\{\varphi(N.a_V), \varphi(a_V)\,|\,\emptyset \neq V \subsetneq \{1, \dots, n\}\right\}\right),
\end{align*}
where $\varphi(N.a_V) = \varphi(N.a_{v_1}\cdots N.a_{v_s})$ if $V = \{v_1 < \cdots < v_s\}$. Note that each monomial in the term $S_\chi(\bullet)$ above contains at least one factor $\varphi(N.a_V)$ and hence, by the induction hypothesis, is a polynomial in $N$ without a constant term and in the set of submoments. Hence
\begin{align*}
\varphi(N.a_1\cdots N.a_n) 
&=  \varphi(a_1\cdots a_n) + \sum_{L=1}^{N-1}\left[\varphi\left((L+1).a_1\cdots (L+1).a_n\right) - \varphi(L.a_1\cdots L.a_n)\right]  \\
&= \varphi(a_1\cdots a_n)+ \sum_{L = 1}^{N-1}\left[\varphi(a_1\cdots a_n) + S_\chi\left(\left\{\varphi(L.a_V), \varphi(a_V)\,|\,\emptyset \neq V \subsetneq \{1, \dots, n\}\right\}\right)\right]\\
&= N\varphi(a_1\cdots a_n) + \sum_{L = 1}^{N-1}S_\chi\left(\left\{\varphi(L.a_V), \varphi(a_V)\,|\,\emptyset \neq V \subsetneq \{1, \dots, n\}\right\}\right)
\end{align*}
is a polynomial in $N$ without a constant term and in the set of submoments since $1^p  + 2^p+\cdots (N-1)^p$ is a polynomial in $N$ without a constant term for each $p\geq1$. 
\end{proof}

\begin{prop}
Under the same assumptions and notation as Proposition \ref{DotMoment}, the coefficient of $N$ in $\varphi(N.a_1\cdots N.a_n)$ is the bi-monotonic cumulant $K_\chi(a_1, \dots, a_n)$ of $a_1, \dots, a_n$.
\end{prop}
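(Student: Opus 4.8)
The plan is to define, for each $n \geq 1$ and $\chi : \{1, \dots, n\} \to \{\ell, r\}$, the functional $\widetilde K_\chi(a_1, \dots, a_n)$ to be the coefficient of $N$ in the polynomial $\varphi(N.a_1 \cdots N.a_n)$ (which is a genuine polynomial in $N$ with no constant term by Proposition~\ref{DotMoment}), and then verify that the family $\{\widetilde K_\chi\}$ satisfies the three defining conditions of Definition~\ref{BiMonCumulants}. Since bi-monotonic cumulants are unique (as established right after Definition~\ref{BiMonCumulants}), this identifies $\widetilde K_\chi = K_\chi$ and simultaneously proves existence.

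First I would check multilinearity (Condition~$(1)$): for fixed $N$, the map $(a_1, \dots, a_n) \mapsto \varphi(N.a_1 \cdots N.a_n)$ is multilinear because $N.a_j = a_j^{(1)} + \cdots + a_j^{(N)}$ depends linearly on $a_j$ and $\varphi$ is linear; extracting the coefficient of $N$ from a polynomial identity that is multilinear in the $a_j$'s for every $N$ preserves multilinearity. Next, Condition~$(3)$: by the iteration property of the dot operation noted before Definition~\ref{BiMonCumulants}, $\varphi(M.(N.a_1) \cdots M.(N.a_n)) = \varphi((MN).a_1 \cdots (MN).a_n)$; writing $\varphi(N.a_1 \cdots N.a_n) = \sum_{j \geq 1} c_j N^j$ with $c_1 = \widetilde K_\chi(a_1, \dots, a_n)$, the left side is $\sum_j c_j(N) M^j$ where $c_j(N)$ is the coefficient of $M^j$ applied to the tuple $(N.a_1, \dots, N.a_n)$, while the right side is $\sum_j c_j (MN)^j$; comparing the coefficient of $M^1$ gives $\widetilde K_\chi(N.a_1, \dots, N.a_n) = c_1 N = N\widetilde K_\chi(a_1, \dots, a_n)$.

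The main work is Condition~$(2)$, the existence of a polynomial $Q_\chi$ (with no constant or linear term) expressing $\varphi(a_1 \cdots a_n)$ in terms of $\widetilde K_\chi(a_1, \dots, a_n)$ and the lower cumulants $\widetilde K_{\chi|_V}((a_1, \dots, a_n)|_V)$ for proper subsets $V$. Here I would follow the argument of \cite{HS2011-2}*{Proposition~3.2} closely. From the proof of Proposition~\ref{DotMoment} we have the recursion $\varphi((N+1).a_1 \cdots (N+1).a_n) - \varphi(N.a_1 \cdots N.a_n) = \varphi(a_1 \cdots a_n) + S_\chi(\{\varphi(N.a_V), \varphi(a_V)\})$, and summing telescopically gives $\varphi(N.a_1 \cdots N.a_n) = N\varphi(a_1 \cdots a_n) + \sum_{L=1}^{N-1} S_\chi(\{\varphi(L.a_V), \varphi(a_V)\})$. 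By induction on $n$, each proper submoment $\varphi(L.a_V)$ is a polynomial in $L$ and in $\{\widetilde K_{\chi|_U}((a_1,\dots,a_n)|_U)\}_{U \subseteq V}$ whose coefficient of $L^1$ is $\widetilde K_{\chi|_V}((a_1,\dots,a_n)|_V)$; substituting this and extracting the coefficient of $N$ from both sides (using that $\sum_{L=1}^{N-1} L^p$ is a polynomial in $N$ with known linear coefficient, e.g.\ the coefficient of $N$ in $\sum_{L=1}^{N-1} L^p$ is the Bernoulli-type constant, which for $p\geq 1$ contributes to the $N$-coefficient only via the $p=0$... ) yields $\varphi(a_1 \cdots a_n) = \widetilde K_\chi(a_1, \dots, a_n) + (\text{polynomial in lower cumulants with no constant or linear term})$, which is exactly Condition~$(2)$.

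The hard part will be bookkeeping in this last step: one must carefully track how the coefficient of $N^1$ in $\sum_{L=1}^{N-1} S_\chi(\{\varphi(L.a_V), \varphi(a_V)\})$ decomposes. Each monomial of $S_\chi$ contains at least one factor $\varphi(L.a_V)$, and after substituting the inductive polynomial expressions, a monomial contributes to the coefficient of $N^1$ only through its own lowest-degree-in-$L$ part combined with the $N$-coefficient of the relevant power sum; since $S_\chi$ has no constant term, every such contribution is a product of at least two lower cumulants, guaranteeing $Q_\chi$ has no constant or linear term. An explicit form of $S_\chi$ is available from Lemma~\ref{lem:FormulaTwoPairs}, but for this proposition only its qualitative structure (universality, no constant term, each monomial touching the first pair) is needed. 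Once Conditions~$(1)$--$(3)$ are verified, uniqueness forces $\widetilde K_\chi = K_\chi$, completing the proof.
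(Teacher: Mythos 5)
Your proposal is correct and follows essentially the same route as the paper: define the candidate cumulant as the coefficient of $N$, verify the three conditions of Definition \ref{BiMonCumulants}, and invoke uniqueness --- the paper's own proof handles Condition $(3)$ by exactly your $M$-coefficient comparison and simply defers Conditions $(1)$ and $(2)$ to the proof of Proposition \ref{DotMoment} (your telescoping argument) or to Theorem \ref{thm:moment-cumulant}. One small quibble: ``since $S_\chi$ has no constant term'' does not by itself force each contribution to be a product of at least two cumulants --- what does is that every monomial of $S_\chi$ contains one dotted factor $\varphi(N.a_V)$ \emph{and} at least one nontrivial undotted factor $\varphi(a_W)$ --- but this no-constant-or-linear-term property of $Q_\chi$ is anyway a consequence of Condition $(1)$ rather than a requirement of Condition $(2)$, so your verification stands.
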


\begin{proof}
Conditions (1) and (2) of Definition \ref{BiMonCumulants} follow from the proof of Proposition \ref{DotMoment} or from Theorem \ref{thm:moment-cumulant} below. For Condition $(3)$ of Definition \ref{BiMonCumulants}, we have
\[\varphi(M.(N.a_1)\cdots M.(N.a_n)) = \varphi((MN).a_1\cdots (MN).a_n)\]
by the associativity of bi-monotonic independence. Since $\varphi(M.(N.a_1)\cdots M.(N.a_n))$ can be written as
\[MK_\chi(N.a_1, \dots, N.a_n) + M^2Q'_\chi\left(\{M\} \cup \{\varphi(N.a_V)\,|\,\emptyset \neq V \subsetneq \{1, \dots, n\}\}\right)\]
for some polynomial $Q'_\chi$, and $\varphi((MN).a_1\cdots (MN).a_n)$ can be written as
\[MNK_\chi(a_1, \dots, a_n) + M^2N^2Q''_\chi\left(\{MN\} \cup \{\varphi(a_V)\,|\emptyset \neq V \subsetneq \{1, \dots, n\}\}\right)\]
for some polynomial $Q''_\chi$, we have $K_\chi(N.a_1, \dots, N.a_n) = NK_\chi(a_1, \dots, a_n)$.
\end{proof}

\begin{rem}
If we replace the associated independence of the dot operation by bi-free or bi-Boolean independence, then the bi-free and bi-Boolean cumulants can be defined by exactly the same procedure as above, which are unique and satisfy a stronger property (the vanishing characterization) than Condition $(3)$ of Definition \ref{BiMonCumulants}. For the explicit moment-cumulant formulae, see \cites{MN2015, CNS2015-1, GS2017}.
\end{rem}

\subsection{The bi-monotonic moment-cumulant formula}

In this subsection, a bi-monotonic moment-cumulant formula analogous to \cite{HS2011-2}*{Theorem 5.3} is described. To begin, the following definition is required.

\begin{defn}
Let $n \geq 1$ and let $\chi: \{1, \dots, n\} \to \{\ell, r\}$.
\begin{enumerate}[$\quad(1)$]
\item Denote by $\I(\chi)$ the set of subsets of $\{1, \dots, n\}$ which are $\chi$-intervals.

\item For $V = \{v_1 \prec_\chi \cdots \prec_\chi v_s\} \subset \{1, \dots, n\}$, denote by $V_\chi$ the set of $\chi$-intervals $\{V_1, \dots, V_{s + 1}\}$ where $V_1 = \{i \in \{1, \dots, n\}\,|\,i \prec_\chi v_1\}$, $V_{s + 1} = \{i \in \{1, \dots, n\}\,|\,v_s \prec_\chi i\}$, and $V_j = \{i \in \{1, \dots, n\}\,|\,v_{j - 1} \prec_\chi i \prec_\chi v_j\}$ for $2 \leq j \leq s$.
\end{enumerate}
\end{defn}

Building on Lemma \ref{lem:BiMonoFormula}, we have the following.

\begin{lem}\label{lem:FormulaTwoPairs}
Let $(\A_{\ell}', \A_{r}')$ and $(\A_{\ell}'', \A_{r}'')$ be pairs of algebras in a non-commutative space $(\A, \varphi)$ which are bi-monotonically independent with respect to $\varphi$. If $n \geq 1$, $\chi: \{1, \dots, n\} \to \{\ell, r\}$, and $a'_1, a''_1, \dots, a'_n, a''_n \in \A$ with $a'_j \in \A_{\chi(j)}'$ and $a''_j \in \A_{\chi(j)}''$, then
\[\varphi((a'_1 + a''_1)\cdots(a'_n + a''_n)) = \sum_{V \subset \{1, \dots, n\}}\varphi(a'_V)\prod_{W \in V_\chi}\varphi(a''_W).\]
\end{lem}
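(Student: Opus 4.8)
The plan is to expand the product $(a'_1+a''_1)\cdots(a'_n+a''_n)$ multilinearly into $2^n$ terms indexed by the subset $V \subset \{1,\dots,n\}$ of positions where the primed element is chosen, so that
\[
\varphi\bigl((a'_1+a''_1)\cdots(a'_n+a''_n)\bigr) = \sum_{V \subset \{1,\dots,n\}} \varphi\Bigl(\textstyle\prod_{j=1}^n c_j\Bigr),
\]
where $c_j = a'_j$ if $j \in V$ and $c_j = a''_j$ if $j \notin V$. It then suffices to show that for each fixed $V$, the single mixed moment equals $\varphi(a'_V)\prod_{W \in V_\chi}\varphi(a''_W)$.

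To evaluate this mixed moment I would apply Lemma \ref{lem:BiMonoFormula}, taking $\A_{1,\ep} = \A'_\ep$ (the ``second'' algebra, with index $2$ in the lemma's labeling convention) playing the role of the factored-out pieces and $\A_{2,\ep} = \A''_\ep$ as the base — actually the cleaner matching is to relabel so that the primed pair is index $2$: write $\omega(j) = 2$ for $j \in V$ and $\omega(j) = 1$ for $j \notin V$. Then Lemma \ref{lem:BiMonoFormula} gives
\[
\varphi\Bigl(\textstyle\prod_{j=1}^n c_j\Bigr) = \varphi(a_{W_0}) \prod_{\substack{U \in \pi_{\chi,\omega} \\ \omega(U)=2}} \varphi(a_U),
\]
where $W_0 = \{j : \omega(j) = 1\} = \{1,\dots,n\}\setminus V$ and $a_{W_0}$ is the ordered product of the $a''_j$ with $j \notin V$, i.e. $a_{W_0} = a''_{W_0}$. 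The key combinatorial identification is then that the blocks $U$ of $\pi_{\chi,\omega}$ with $\omega(U) = 2$ are exactly the maximal $\chi$-intervals of $V$ — but since the $a'_j$ live in the single pair $(\A'_\ell,\A'_r)$, consecutive-in-$\prec_\chi$ elements of $V$ get merged, so $\varphi(a'_V)$ (which is the product over ALL of $V$ in $\prec_\chi$ order) factors as the product of $\varphi$ over these maximal $\chi$-interval blocks only if $V$ is itself a $\chi$-interval. The honest statement is the reverse grouping: $\pi_{\chi,\omega}$ has blocks alternating between ``runs inside $V$'' and ``runs inside $W_0$'', and the $\omega = 1$ blocks (the runs inside $W_0$) are precisely the members of $V_\chi$ (the complementary $\chi$-intervals $V_1,\dots,V_{s+1}$), each possibly further subdivided — no, they are not subdivided, since $\omega$ is constant $1$ on all of $W_0$'s elements between two consecutive elements of $V$. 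So the $\omega=1$ blocks of $\pi_{\chi,\omega}$ are exactly $\{W \in V_\chi : W \neq \emptyset\}$, and the $\omega=2$ blocks are the maximal $\prec_\chi$-runs of $V$. This forces a reconciliation: Lemma \ref{lem:BiMonoFormula} keeps $a''_{W_0}$ as one unfactored product but factors $a'_V$ into its maximal runs, whereas the target formula does the opposite.

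Therefore the right move is to apply Lemma \ref{lem:BiMonoFormula} with the roles of the two pairs swapped, i.e. factor out the $\A''$-pieces: set $\omega(j) = 2$ for $j \notin V$ and $\omega(j) = 1$ for $j \in V$. Then $W_0 = V$, so $\varphi(a_{W_0}) = \varphi(a'_V)$, and the $\omega = 2$ blocks of $\pi_{\chi,\omega}$ are the maximal $\prec_\chi$-runs of $W_0 = \{1,\dots,n\}\setminus V$, which are exactly the nonempty members of $V_\chi = \{V_1,\dots,V_{s+1}\}$ (the definition of $V_\chi$ as the gaps of $V$ in the $\prec_\chi$ order produces precisely these runs). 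Since $\varphi(a_U) = 1$ when $U = \emptyset$ (empty product gives the unit, $\varphi(1) = 1$), the product over nonempty $\omega=2$ blocks equals $\prod_{W \in V_\chi}\varphi(a''_W)$. Summing over all $V$ yields the claimed identity. The main obstacle I anticipate is purely bookkeeping: carefully checking that the blocks of $\pi_{\chi,\omega}$ with $\omega$-value corresponding to $\A''$ coincide with the set $V_\chi$ as defined (including the correct handling of the empty leading/trailing intervals $V_1$ and $V_{s+1}$ and the convention $\varphi(a_\emptyset) = 1$), and confirming that the $\prec_\chi$-order in which the product $a''_W$ is taken within each block matches the definition of $a_W$; once the indexing conventions are aligned this is immediate from Lemma \ref{lem:BiMonoFormula}.
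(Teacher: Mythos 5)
Your proposal is correct and is essentially the paper's own (very terse) proof: expand multilinearly, index the $2^n$ terms by the set $V$ of positions carrying primed elements, and apply Lemma \ref{lem:BiMonoFormula} with $\omega=1$ on $V$ and $\omega=2$ on $V^{\complement}$, identifying the $\omega=2$ blocks of $\pi_{\chi,\omega}$ with the nonempty members of $V_\chi$. Note that your final choice of $\omega$ is in fact the only admissible one, since the hypothesis $a_j\in\A_{\omega(j),\chi(j)}$ together with the given order of bi-monotonic independence forces the primed pair to carry index $1$; the detour through the opposite labelling is harmless but unnecessary.
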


\begin{proof}
In the expansion of $\varphi((a'_1 + a''_1)\cdots(a'_n + a''_n))$, every term corresponds to a unique subset $V$ of $\{1, \dots, n\}$, where the elements of $V$ represent the positions of the elements from the pair $(\A_{\ell}', \A_{r}')$. The formula then follows from Lemma \ref{lem:BiMonoFormula}.
\end{proof}

As shown in Proposition \ref{DotMoment}, $\varphi(N.a_1\cdots N.a_n)$ is a polynomial in $N$ and the set of submoments, and thus we can replace $N$ by $t \in \bR$, denoted $\varphi_t(a_1, \dots, a_n)$, and obtain the following result.

\begin{cor}\label{MomentDiff}
Let $(\A_\ell,\A_r)$ be a pair of algebras in a non-commutative space $(\A, \varphi)$.  Then
\[\frac{d}{dt}\varphi_t(a_1, \dots, a_n) = \sum_{V \in \I(\chi)}\varphi_t\left((a_1, \dots, a_n)|_{V^\complement}\right)K_{\chi|_V}\left((a_1, \dots, a_n)|_V\right)\]
for all $n \geq 1$, $\chi: \{1, \dots, n\} \to \{\ell, r\}$, and $a_1, \dots, a_n \in \A$ with $a_j \in \A_{\chi(j)}$.
\end{cor}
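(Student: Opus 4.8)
The plan is to deduce the identity from Lemma~\ref{lem:FormulaTwoPairs} together with the polynomial dependence of $\varphi_t$ on $t$ supplied by Proposition~\ref{DotMoment}, via a differentiation argument in the spirit of the monotonic case in \cite{HS2011-2}.

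The first step is to record an \emph{addition formula} for the dot operation. Fix $n \geq 1$, $\chi: \{1, \dots, n\} \to \{\ell, r\}$, and $a_1, \dots, a_n$ with $a_j \in \A_{\chi(j)}$. For positive integers $s, t$, in the construction of the $(s+t)$-fold dot operation split off the first $s$ copies from the last $t$ copies; by the associativity of the bi-monotonic product (which is exactly what makes the dot operation iterable) the pair of subalgebras generated by the first $s$ copies and the pair generated by the last $t$ copies are bi-monotonically independent \emph{in this order}, with joint distributions $\varphi_s$ and $\varphi_t$ respectively, and $(s+t).a_j = (s.a_j) + (t.a_j)$. Applying Lemma~\ref{lem:FormulaTwoPairs} to these two pairs yields
\[
\varphi_{s+t}(a_1, \dots, a_n) = \sum_{V \subseteq \{1, \dots, n\}} \varphi_s\left((a_1, \dots, a_n)|_V\right) \prod_{W \in V_\chi} \varphi_t\left((a_1, \dots, a_n)|_W\right).
\]
By Proposition~\ref{DotMoment} both sides are polynomials in $s$ and $t$, and they agree whenever $s, t$ are positive integers, so the identity holds for all $s, t \in \bR$.

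The second step is to differentiate this identity with respect to $t$ and evaluate at $t = 0$. Since the left-hand side depends only on $s + t$, its $t$-derivative at $t = 0$ is $\frac{d}{ds}\varphi_s(a_1, \dots, a_n)$. On the right-hand side the factor $\varphi_s\left((a_1, \dots, a_n)|_V\right)$ does not involve $t$, so what remains is the $t$-derivative at $0$ of $\prod_{W \in V_\chi} \varphi_t\left((a_1, \dots, a_n)|_W\right)$. Here I would use three facts: $\varphi_0$ applied to the empty sequence equals $1$; $\varphi_0$ applied to any nonempty sequence equals $0$, since by Proposition~\ref{DotMoment} the relevant polynomial has no constant term; and $\frac{d}{dt}\big|_{t = 0} \varphi_t\left((a_1, \dots, a_n)|_W\right) = K_{\chi|_W}\left((a_1, \dots, a_n)|_W\right)$, by the definition of the bi-monotonic cumulants as the linear coefficient. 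By the product rule the derivative of the product over $W \in V_\chi$ therefore vanishes unless the partition $V_\chi$ has exactly one nonempty block $W_0$, in which case it equals $K_{\chi|_{W_0}}\left((a_1, \dots, a_n)|_{W_0}\right)$.

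The last step is to recognize the surviving index set. The nonempty blocks of $V_\chi$ always partition $V^\complement$, so $V_\chi$ has exactly one nonempty block precisely when $V^\complement$ is a nonempty $\chi$-interval, and then that block is $W_0 = V^\complement$. Reindexing by $U := V^\complement$, which runs over the nonempty members of $\I(\chi)$, gives
\[
\frac{d}{ds}\varphi_s(a_1, \dots, a_n) = \sum_{U \in \I(\chi)} \varphi_s\left((a_1, \dots, a_n)|_{U^\complement}\right) K_{\chi|_U}\left((a_1, \dots, a_n)|_U\right),
\]
and relabelling $s$ as $t$ is the claim. I expect the whole argument to be essentially routine; the one point deserving care is the addition formula together with its extension from positive integer parameters to all real $s, t$, since everything afterwards is a direct computation.
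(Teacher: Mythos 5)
Your proposal is correct and follows essentially the same route as the paper: apply Lemma \ref{lem:FormulaTwoPairs} to the split $(s+t).a_j = s.a_j + t.a_j$, extend to real parameters via the polynomiality from Proposition \ref{DotMoment}, and differentiate at $t=0$, where only the subsets $V$ with $V^\complement$ a single nonempty $\chi$-interval survive. The extra care you take with the integer-to-real extension and the product-rule bookkeeping is sound but not a different argument.
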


\begin{proof}
Replacing $a_j'$ by $t.a_j$ and $a_j''$ by $s.a_j$ in Lemma \ref{lem:FormulaTwoPairs}, we obtain that
\[\varphi_{t + s}(a_1, \dots, a_n) = \sum_{V \subset \{1, \dots, n\}}\varphi_t((a_1, \dots, a_n)|_V)\prod_{W \in V_\chi}\varphi_s((a_1, \dots, a_n)|_W).\]
If we apply the derivation $\left.\frac{d}{ds}\right|_{s = 0}$ to the above equation, then each non-zero term on the right-hand side corresponds to a subset $V \subset \{1, \dots, n\}$ such that $|V_\chi| = 1$, i.e., $V_\chi$ consists of only one $\chi$-interval. It follows that
\[
\frac{d}{dt}\varphi_t(a_1, \dots, a_n) = \sum_{\substack{V \subset \{1, \dots, n\}\\V^\complement \in \I(\chi)}}\varphi_t((a_1, \dots, a_n)|_V)K_{\chi|_{V^\complement}}\left((a_1, \dots, a_n)|_{V^\complement}\right),
\]
from which the assertion follows by interchanging $V$ and $V^\complement$.
\end{proof}

To present the bi-monotonic moment-cumulant formula we require the following. 
\begin{defn}
Let $n \geq 1$ and let $\chi: \{1, \dots, n\} \to \{\ell, r\}$.
\begin{enumerate}[$\quad(1)$]
\item A \textit{bi-non-crossing partition with respect to $\chi$} is a partition $\pi$ on $\{1,\ldots, n\}$ such that $\pi$ is non-crossing with respect to the order $\prec_\chi$.  The set of all bi-non-crossing partitions is denoted by $\B\N\C(\chi)$.

\item A \textit{linearly ordered bi-non-crossing partition} (with respect to $\chi$) is a pair $(\pi, \lambda)$ where $\pi$ is a bi-non-crossing partition in $\B\N\C(\chi)$ and $\lambda$ is a linear ordering on the blocks of $\pi$. The set of all linearly ordered bi-non-crossing partitions is denoted by $\L\B\N\C(\chi)$.

\item If $\pi \in \B\N\C(\chi)$ and $V$ and $W$ are blocks of $\pi$, then $V$ is said to be \textit{interior} with respect to $W$ if there exist $w_1, w_2 \in W$ such that $w_1 \prec_\chi v \prec_\chi w_2$ for some (hence for all) $v \in V$.

\item A \textit{bi-monotonic partition} (with respect to $\chi$) is a linearly ordered bi-non-crossing partition $(\pi, \lambda) \in \L\B\N\C(\chi)$ with the following property: If $V$ and $W$ are blocks of $\pi$ such that $V$ is interior with respect to $W$, then $\lambda(W) < \lambda(V)$. The set of all bi-monotonic partitions is denoted by $\B\M(\chi)$.
\end{enumerate}
\end{defn}

The bi-free moment-cumulant formula (see \cites{MN2015, CNS2015-1}) is given by
\[\varphi(a_{1}\cdots a_{n}) = \sum_{\pi \in \B\N\C(\chi)}\left(\prod_{V \in \pi}\kappa_{\chi|_V}\left((a_1, \dots, a_n)|_V\right)\right)\]
for all $n \geq 1$, $\chi: \{1, \dots, n\} \to \{\ell, r\}$, $\omega: \{1, \dots, n\} \to K$, and $a_1, \dots, a_n \in \A$ with $a_j \in \A_{\omega(j), \chi(j)}$.  Alternatively, using the above definitions,  the bi-free moment-cumulant formula can be naturally written as
\[\varphi(a_{1}\cdots a_{n}) = \sum_{(\pi, \lambda) \in \L\B\N\C(\chi)}\frac{1}{|\pi|!}\left(\prod_{V \in \pi}\kappa_{\chi|_V}\left((a_1, \dots, a_n)|_V\right)\right),\]
where $|\pi|$ denotes the number of blocks of $\pi$.

For a bi-monotonic partition $(\pi, \lambda) \in \B\M(\chi)$, we observe that the largest (with respect to $\lambda$) block of $\pi$ must be a $\chi$-interval. If $V$ denotes this block, then $(\pi_0, \lambda_0) \in \B\M(\chi|_{V^\complement})$, where $\pi_0 = \pi \setminus \{V\}$, $\lambda_0$ denotes $\lambda$ restricted to $\pi_0$, and $\chi|_{V^\complement}$ denotes $\chi$ restricted to $\{1, \dots, n\} \setminus V$. Consequently, the sum $\sum_{(\pi, \lambda) \in \B\M(\chi)}$ can be written as $\sum_{V \in \I(\chi)}\sum_{(\pi_0, \lambda_0) \in \B\M(\chi|_{V^\complement})}$ by grouping together all bi-monotonic partitions with the same largest block.

\begin{thm}\label{thm:moment-cumulant}
Let $(\A_{\ell}, \A_{r})$ be a pair of algebras in a non-commutative space $(\A, \varphi)$. Then
\[\varphi(a_{1}\cdots a_{n}) = \sum_{(\pi, \lambda) \in \B\M(\chi)}\frac{1}{|\pi|!}\left(\prod_{V \in \pi}K_{\chi|_V}\left((a_1, \dots, a_n)|_V\right)\right)\]
for all $n \geq 1$, $\chi: \{1, \dots, n\} \to \{\ell, r\}$, and $a_1, \dots, a_n \in \A$ with $a_j \in \A_{\chi(j)}$.
\end{thm}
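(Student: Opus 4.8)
The plan is to prove the formula by induction on $n$, using the differential equation from Corollary \ref{MomentDiff} together with the recursive structure of bi-monotonic partitions described just before the statement. First I would introduce the right-hand side as an auxiliary quantity: set $t$ to be the dot-parameter and define, for $t \in \bR$,
\[
M_t(a_1, \dots, a_n) := \sum_{(\pi, \lambda) \in \B\M(\chi)} \frac{t^{|\pi|}}{|\pi|!} \prod_{V \in \pi} K_{\chi|_V}\bigl((a_1, \dots, a_n)|_V\bigr),
\]
so that the claim is exactly $\varphi_t(a_1, \dots, a_n) = M_t(a_1, \dots, a_n)$ at $t = 1$ (and in fact for all $t$, by the dot-scaling $K_\chi(N.a_1,\dots,N.a_n) = N K_\chi(a_1,\dots,a_n)$ which shows the left side is the polynomial in $t$ obtained by rescaling cumulants). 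The strategy is to show both sides satisfy the same first-order ODE in $t$ with the same initial condition at $t = 0$. At $t = 0$ both sides are $0$ for $n \geq 1$ (the empty partition is excluded since blocks are nonempty, and $\varphi_0 = 0.\varphi = 0$ on nonscalars); one should double-check the $n = 1$ base case separately, where $\B\M(\chi)$ has a single element and $\varphi_t(a_1) = t\varphi(a_1) = tK_{(\chi(1))}(a_1)$.

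For the inductive step, I would differentiate $M_t$ in $t$. Each bi-monotonic partition $(\pi,\lambda)$ has its $\lambda$-largest block $V$ necessarily a $\chi$-interval (as noted in the excerpt), and removing it yields $(\pi_0, \lambda_0) \in \B\M(\chi|_{V^\complement})$; conversely any $\chi$-interval $V$ together with a bi-monotonic partition of $V^\complement$ reassembles (uniquely, placing $V$ as the new maximum) to a bi-monotonic partition of $\chi$. Carrying out $\frac{d}{dt}$ on the generating-function form, the factor $\frac{t^{|\pi|}}{|\pi|!}$ becomes $\frac{t^{|\pi|-1}}{(|\pi|-1)!} = \frac{t^{|\pi_0|}}{|\pi_0|!}$, and peeling off the largest block gives exactly
\[
\frac{d}{dt} M_t(a_1, \dots, a_n) = \sum_{V \in \I(\chi)} K_{\chi|_V}\bigl((a_1,\dots,a_n)|_V\bigr)\, M_t\bigl((a_1, \dots, a_n)|_{V^\complement}\bigr).
\]
By the induction hypothesis applied to the strictly shorter tuple $(a_1,\dots,a_n)|_{V^\complement}$, we have $M_t((a_1,\dots,a_n)|_{V^\complement}) = \varphi_t((a_1,\dots,a_n)|_{V^\complement})$, so the right-hand side coincides with the expression in Corollary \ref{MomentDiff} for $\frac{d}{dt}\varphi_t(a_1,\dots,a_n)$. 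Since $M_t$ and $\varphi_t$ are polynomials in $t$ agreeing at $t = 0$ with equal derivatives (as functions of $t$, using the inductive hypothesis only for the derivative identity), they are equal for all $t$, and in particular at $t = 1$.

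The main obstacle I anticipate is the combinatorial bookkeeping in the differentiation step: one must verify carefully that the "peel off the largest block" operation is a bijection between $\B\M(\chi)$ (with a distinguished maximal block that is a $\chi$-interval) and the set of pairs $(V, (\pi_0,\lambda_0))$ with $V \in \I(\chi)$ and $(\pi_0,\lambda_0) \in \B\M(\chi|_{V^\complement})$ — in particular that re-inserting $V$ as a new top block always produces a legal bi-monotonic partition (the bi-non-crossing condition is automatic since $V$ is a $\chi$-interval disjoint from the rest, and the monotonicity condition $\lambda(W) < \lambda(V)$ is satisfied for every $W$ interior to $V$ because $\lambda(V)$ is the new maximum; nesting relations among the blocks of $\pi_0$ are unchanged). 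One also has to track the factorial/power-of-$t$ bookkeeping so that $\frac{d}{dt}\frac{t^{|\pi|}}{|\pi|!}$ matches $\frac{t^{|\pi_0|}}{|\pi_0|!}$ with $|\pi_0| = |\pi| - 1$. Everything else is a routine comparison of two polynomial solutions of the same linear ODE.
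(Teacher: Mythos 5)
Your proposal is correct and is essentially the paper's own proof: both proceed by induction on $n$ for the $t$-deformed identity $\varphi_t(a_1,\dots,a_n)=\sum_{(\pi,\lambda)\in\B\M(\chi)}\frac{t^{|\pi|}}{|\pi|!}\prod_{V\in\pi}K_{\chi|_V}((a_1,\dots,a_n)|_V)$, using the decomposition of $\B\M(\chi)$ by peeling off the $\lambda$-maximal block (necessarily a $\chi$-interval) together with Corollary~\ref{MomentDiff}. The only cosmetic difference is that you differentiate both sides and compare ODEs with the initial condition at $t=0$, whereas the paper writes the same computation as an integration from $0$ to $t$.
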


\begin{proof}
We proceed by induction on $n$ to show that
\[\varphi_t(a_1, \dots, a_n) = \sum_{(\pi, \lambda) \in \B\M(\chi)}\frac{t^{|\pi|}}{|\pi|!}\left(\prod_{V \in \pi}K_{\chi|_V}\left((a_1, \dots, a_n)|_V\right)\right)\]
for $t \in \bR$. The base case $n = 1$ is clear. For the inductive step, notice
\begin{align*}
&\sum_{(\pi, \lambda) \in \B\M(\chi)}\frac{t^{|\pi|}}{|\pi|!}\left(\prod_{V \in \pi}K_{\chi|_V}\left((a_1, \dots, a_n)|_V\right)\right)\\
&= \sum_{V \in \I(\chi)}\sum_{(\pi_0, \lambda_0) \in \B\M(\chi|_{V^\complement})}\frac{t^{|\pi_0| + 1}}{(|\pi_0| + 1)!}\left(\prod_{V_0 \in \pi_0}K_{\chi|_{V_0}}\left((a_1, \dots, a_n)|_{V_0}\right)\right)K_{\chi|_V}\left((a_1, \dots, a_n)|_V\right)\\
&= \sum_{V \in \I(\chi)}\int_0^t\sum_{(\pi_0, \lambda_0) \in \B\M(\chi|_{V^\complement})}\frac{s^{|\pi_0|}}{|\pi_0|!}\left(\prod_{V_0 \in \pi_0}K_{\chi|_{V_0}}\left((a_1, \dots, a_n)|_{V_0}\right)\right)K_{\chi|_V}\left((a_1, \dots, a_n)|_V\right)ds\\
&= \int_0^t\sum_{V \in \I(\chi)}\varphi_s\left((a_1, \dots, a_n)|_{V^\complement}\right)K_{\chi|_V}\left((a_1, \dots, a_n)|_V\right)\,ds\\
&= \int_0^t\frac{d}{ds}\varphi_s(a_1, \dots, a_n)\,ds\\
&= \varphi_t(a_1, \dots, a_n),
\end{align*}
where the third equality follows from the induction hypothesis and the fourth equality follows from Corollary \ref{MomentDiff}.
\end{proof}

Note that if $\chi: \{1, \dots, n\} \to \{\ell, r\}$ is constant, then $\B\M(\chi)$ reduces to the set $\M(n)$ of monotonic partitions of $\{1, \dots, n\}$ introduced in \cite{M2002} and used in \cite{HS2011-2}*{Section 5}, and Theorem \ref{thm:moment-cumulant} reduces to \cite{HS2011-2}*{Theorem 5.3}.

\begin{exam} We identify $\chi\colon\{1,\dots,n\}\to \{\ell,r\}$ with a sequence $(\chi(1),\dots,\chi(n))$. It is obvious that $K_{(\ell)}$ and $K_{(r)}$ are simply equal to $\varphi|_{\A_\ell}$ and $\varphi|_{\A_r}$, respectively. By the bi-monotonic moment-cumulant formula, we see that $K_{(\ell,\ell)},K_{(r,r)},K_{(\ell,r)}$ and $K_{(r,\ell)}$ are all the covariance. For the  cumulants of order three we can see a difference from the usual monotone cumulants. For $a_1,a_3 \in \A_\ell$ and $a_2\in\A_r$, Theorem \ref{thm:moment-cumulant} says that
\begin{align*}
\varphi(a_1a_2a_3) 
&= K_{(\ell,r,\ell)}(a_1,a_2,a_3) + \frac{1}{2}K_{(\ell,r)}(a_1,a_2) K_{(\ell)}(a_3) + K_{(\ell,\ell)}(a_1,a_3) K_{(\ell)}(a_2) \\
&\qquad+K_{(r,\ell)}(a_2,a_3) K_{(\ell)}(a_1) + K_{(\ell)}(a_1)K_{(r)}(a_2)K_{(\ell)}(a_3), 
\end{align*}
and hence 
\begin{align*}
K_{(\ell,r,\ell)}(a_1,a_2,a_3) 
&= \varphi(a_1a_2a_3) - \frac{1}{2}\Cov(a_1,a_2) \varphi(a_3) - \Cov(a_1,a_3) \varphi(a_2) \\
&\qquad- \Cov(a_2,a_3) \varphi(a_1) - \varphi(a_1)\varphi(a_2)\varphi(a_3) \\
&=  \varphi(a_1a_2a_3)- \frac{1}{2}\varphi(a_1a_2)\varphi(a_3) -\varphi(a_1a_3)\varphi(a_2) -\varphi(a_2a_3)\varphi(a_1) + \frac{3}{2} \varphi(a_1)\varphi(a_2)\varphi(a_3). 
\end{align*}
Similarly, for $a_1, a_2 \in\A_\ell$ and $a_3 \in \A_r$, 
\begin{align*}
K_{(\ell,\ell,r)}(a_1,a_2,a_3) 
&=  \varphi(a_1a_2a_3)- \varphi(a_1a_2)\varphi(a_3) - \frac{1}{2}\varphi(a_1a_3)\varphi(a_2) -\varphi(a_2a_3)\varphi(a_1) + \frac{3}{2} \varphi(a_1)\varphi(a_2)\varphi(a_3). 
\end{align*}

\end{exam}

\subsection{The bi-monotonic central, Poisson, and compound Poisson limit theorems}\label{subsec:LimitThms}

Using the bi-monotonic cumulants, we easily obtain the bi-monotonic version of central, Poisson, and compound Poisson limit theorems. For the monotonic central and Poisson limit theorems, see \cite{M2001}*{Sections 3 and 4} or \cite{HS2011-1}*{Section 5} for the cumulants approach.

Fix a non-commutative space $(\A, \varphi)$. The main focus here is the (joint) distribution of a pair $(a,b)$ of elements in a non-commutative space, which is the collection of moments $\{\varphi(c_1 c_2 \cdots c_n): n \in \bN, c_i \in\{a,b\}\}$. If $a$ and $b$ are commuting then the distribution reduces to $\{\varphi(a^m b^n): m,n \in \bN \cup \{0\}\}$. 
Consequently, if we define $\chi_{m, n}: \{1, \dots, m + n\} \to \{\ell, r\}$ by $\chi_{m, n}(k) = \ell$ if $k \leq m$ and $\chi_{m, n}(k) = r$ if $k > m$, and denote by $K_{m, n}(a, b)$ the bi-monotonic cumulant
\[K_{\chi_{m, n}}(\underbrace{a, \dots, a}_{m\,\mathrm{times}}, \underbrace{b, \dots, b}_{n\,\mathrm{times}}),\]
then it follows from Theorem \ref{thm:moment-cumulant} that the sequence $\{K_{m, n}(a, b)\}_{m, n \geq 0}$ also uniquely determines the joint distribution of $(a, b)$ with respect to $\varphi$. Limit theorems can now be stated as follows. Furthermore, recall that bi-monotonic independence implies commutativity (in distribution) of each left operator with each right operator from a different pair.
\begin{enumerate}[$\quad(1)$]
\item (The bi-monotonic central limit theorem) Let $\{(a_k, b_k)\}_{k = 1}^\infty$ be a sequence of identically distributed, commuting pairs in $(\A, \varphi)$ that are bi-monotonically independent with respect to $\varphi$ with $\varphi(a_1) = \varphi(b_1) = 0$, $\varphi(a_1^2) = \varphi(b_1^2) = 1$, and $\varphi(a_1b_1) := \gamma$. For $N \geq 1$, let
\[S_N^\ell = \frac{a_1 + \cdots + a_N}{\sqrt{N}} \qand S_N^r = \frac{b_1 + \cdots + b_N}{\sqrt{N}},\]
then $S_N^\ell$ commutes with $S_N^r$ for all $N \geq 1$. By the properties of bi-monotonic cumulants (see Definition \ref{BiMonCumulants}), we have $K_{1, 0}(S_N^\ell, S_N^r) = K_{0, 1}(S_N^\ell, S_N^r) = 0$, $K_{2, 0}(S_N^\ell, S_N^r) = K_{0, 2}(S_N^\ell, S_N^r) = 1$, $K_{1, 1}(S_N^\ell, S_N^r) = \varphi(a_1b_1) = \gamma$, and $K_{m, n}(S_N^\ell, S_N^r) = N^{-(m + n - 2)/2}K_{m, n}(a_1, b_1) \to 0$ as $N \to \infty$ for $m + n \geq 3$. Therefore, the sequence $\{(S_N^\ell, S_N^r)\}_{N = 1}^\infty$ converges in cumulants (hence in moments) as $N \to \infty$ to a commuting two-faced pair $(s_\ell, s_r)$ such that the only non-vanishing bi-monotonic cumulants are given by $K_{2, 0}(s_\ell, s_r) = K_{0, 2}(s_\ell, s_r) = 1$ and $K_{1, 1}(s_\ell, s_r) = \gamma$. We shall study this limiting object in greater detail using generating functions in the next section.

\item (The bi-monotonic Poisson limit theorem) Let $\lambda > 0$ and let $(0, 0) \neq (\alpha, \beta) \in \bR^2$. Suppose for every $N \geq 1$ that $\{(a_k^{(N)}, b_k^{(N)})\}_{k = 1}^N$ identically distributed, commuting pairs in $(\A, \varphi)$ that are bi-monotonically independent with respect to $\varphi$ such that
\begin{equation}\label{eqn:BMPoissonLimits}
\lim_{N \to \infty}N\varphi\left((a_1^{(N)})^m(b_1^{(N)})^n\right) = \lambda\alpha^m\beta^n
\end{equation}
for all $m + n \geq 1$. For $N \geq 1$, let
\[S_N^\ell = a_1^{(N)} + \cdots + a_N^{(N)} \qand S_N^r = b_1^{(N)} + \cdots + b_N^{(N)},\]
then $S_N^\ell$ commutes with $S_N^r$ for all $N \geq 1$. By the properties of bi-monotonic cumulants and the limits in \eqref{eqn:BMPoissonLimits}, we have
\[K_{m, n}(S_N^\ell, S_N^r) = NK_{m, n}(a_1^{(N)}, b_1^{(N)}) = N\left(\varphi\left((a_1^{(N)})^m(b_1^{(N)})^n\right) + O(1/N^2)\right) \to \lambda\alpha^m\beta^n\]
as $N \to \infty$ for all $m + n \geq 1$. Therefore, the sequence $\{(S_N^\ell, S_N^r)\}_{N = 1}^\infty$ converges in distribution as $N \to \infty$ to a commuting two-faced pair $(s_\ell, s_r)$ with bi-monotonic cumulants given by $K_{m, n}(s_\ell, s_r) = \lambda\alpha^m\beta^n$ for $m + n \geq 1$.

\item (The compound bi-monotonic Poisson limit theorem) Let $\lambda >0$ and let $\nu$ be a probability measure on $\bR^2$ with compact support such that $\nu(\{(0,0)\}) = 0$. In the setting of the previous example, suppose more generally that the distribution of $(a_1^{(N)}, b_1^{(N)})$ is such that 
\[ \lim_{N\to\infty} N\varphi\left((a_1^{(N)})^m(b_1^{(N)})^n\right)=\lambda M_{m, n}(\nu) := \lambda\int_{\bR} s^m t^n d\nu(s,t), \quad m, n \geq 0.\]
For example we may take the moments of $(a_1^{(N)}, b_1^{(N)})$ to be those of 
\[
\left(1-\frac{\lambda}{N}\right)\delta_{(0,0)} + \frac{\lambda}{N}\nu.
\]
The previous technique shows the convergence 
\[K_{m, n}(S_N^\ell, S_N^r) = NK_{m, n}(a_1^{(N)}, b_1^{(N)}) \to \lambda M_{m, n}(\nu)\]
as $N \to \infty$. With the moment-cumulant formula, this implies that the moments $\varphi\left((S_N^\ell)^m(S_N^r)^n\right)$ converge to the numbers 
\[
M_{m,n} = \sum_{\pi \in \B\M(\chi_{m,n})} \frac{\lambda^{|\pi|}}{|\pi|!}  \prod_{V\in\pi} M_{\ell(V), r(V)}(\nu), 
\]
where $\ell(V) = |V \cap\{1,\dots,m\}|$ and $r(V)=|V \cap \{m+1,\dots, m+n\}|$. We will show in Section \ref{sec:analytic} that $M_{m,n}$ are not moments of a probability measure in general. 
\end{enumerate}

\subsection{Generating function of bi-monotonic cumulants of single variable} \label{subsec:GF-cumulants} In monotonic probability, a differential equation relates a generating function of monotonic cumulants to a moment generating function \cite{HS2011-2}. We give more general differential equations in the bi-monotonic setting. 
 Let $(a, b)$ be a commuting pair in a non-commutative space $(\A, \varphi)$. For $N \in \bN$, define
\[
G_{1, N}(z) = \sum_{m \geq 0}\frac{\varphi((N.a)^m)}{z^{m + 1}}, \quad G_{2, N}(w) = \sum_{n \geq 0}\frac{\varphi((N.b)^n)}{w^{n + 1}}, \qand G_N(z, w) = \sum_{m, n \geq 0}\frac{\varphi((N.a)^m(N.b)^n)}{z^{m+1}w^{n+1}},
\]
where the dot operation is associated with the bi-monotonic independence. As seen above, $\varphi((N.a)^m(N.b)^n)$ is a polynomial in $N$, thus we may replace $N$ by $t \in \bR$ and obtain a formal power series $G_t(z,w)$. Notice we do not know whether $G_t(z,w)$ is convergent for large $|z|$ and $|w|$, and hence it is defined only as a formal power series. Similarly, define $G_{1, t}$ and $G_{2, t}$ by replacing $N$ by $t$ and denote their reciprocals by $F_{1, t}$ and $F_{2, t}$, respectively. Since the coefficient of $N$ in $\varphi((N.a)^m(N.b)^n)$ is the bi-monotonic cumulant $K_{m, n}(a, b)$, the cumulant generating series of $(a, b)$ is given by
\[
\widetilde{A}(z,w) = (z w)\frac{d}{dt}\left[G_t(z, w)\right]\bigg|_{t=0} = \sum_{\substack{m, n \geq 0\\m + n \geq 1}}\frac{K_{m, n}(a, b)}{z^mw^n}.
\]
Moreover, the convolution formula \eqref{eqn:biMonCauchy} implies
$
G_{M+N}(z,w) = G_M(F_{1,N}(z), F_{2,N}(w)) G_ N(z,w) F_{1,N}(z) F_{2,N}(w), 
$
and replacing $(M,N)$ by $(s,t)$ produces
\begin{equation}\label{eq conv-semigroup}
G_{s+t}(z,w) = G_s(F_{1,t}(z), F_{2,t}(w))G_ t(z,w)F_{1,t}(z)F_{2,t}(w).
\end{equation}

It is convenient to separate the marginal parts and correlation part of $\widetilde{A}$ by letting
\[
H_t(z,w) = G_{t}(z,w)F_{1,t}(z)F_{2,t}(w).
\]
Then Equation \eqref{eq conv-semigroup} reads 
\begin{equation}\label{eq conv-semigroup2}
H_{s+t}(z,w) = H_s(F_{1,t}(z), F_{2,t}(w)) H_t(z,w), \quad H_0(z,w) = 1,
\end{equation}
and we know from the single-variable case that
\begin{equation}\label{diffeq Cauchy}
G_{j, s+t}(z) = G_{j,s}(F_{j,t}(z)), \quad j = 1, 2.
\end{equation}
Define the marginal parts of $\widetilde{A}$ by
\begin{align*}
A_1(z) &= z^2\frac{d}{dt}\left[G_{1,t}(z)\right]\bigg|_{t=0} = \sum_{m \geq 1}\frac{K_{m, 0}(a,b)}{z^{m-1}},\\
A_2(w) &= w^2\frac{d}{dt}\left[G_{2,t}(w)\right]\bigg|_{t=0} = \sum_{n \geq 1}\frac{K_{0, n}(a,b)}{w^{n-1}},
\end{align*}
and the correlation part of $\widetilde{A}$ by
\[
A(z,w) = \frac{d}{dt}\left[H_t(z,w)\right]\bigg|_{t=0} = \sum_{m, n \geq 1}\frac{K_{m,n}(a,b)}{z^mw^n}.
\]
Then 
\begin{equation}\label{GF-cumulants}
\widetilde{A}(z,w) = \frac{1}{z}A_1(z) + \frac{1}{w}A_2(w) + A(z,w). 
\end{equation}

Finally, we notice that taking the derivatives of Equations \eqref{eq conv-semigroup}, \eqref{eq conv-semigroup2}, and \eqref{diffeq Cauchy} with respect to $s$ at $0$ yield the following differential equations:
\begin{equation}\label{eq conv-semigroup3}
\frac{\partial}{\partial t}G_t(z,w) = G_t(z,w)\widetilde{A}(F_{1,t}(z), F_{2,t}(w)), \quad G_0(z,w) = \frac{1}{z w},
\end{equation}
\[
\frac{\partial}{\partial t}H_t(z,w) = H_t(z,w)A(F_{1,t}(z), F_{2,t}(w)), \quad H_0(z,w) = 1,\]
\begin{equation}\label{eq conv-semigroup4}
\frac{\partial}{\partial t}F_{j,t}(z) = -A_j(F_{j,t}(z)), \quad F_{j, 0}(z) = z, \quad j = 1, 2.
\end{equation}
The last equation \eqref{eq conv-semigroup4} was previously obtained in \cite{HS2011-2}*{Equation 6.2}.

\begin{exam}\label{exam:CLT}
We revisit the bi-monotonic central limit distribution from Subsection \ref{subsec:LimitThms}. Suppose $(a, b)$ has bi-monotonic cumulants $K_{2, 0}(a, b) = \alpha$, $K_{0, 2}(a, b) = \beta$,  $K_{1, 1}(a, b) = \gamma$, and other cumulants being zero, where $\alpha, \beta > 0$ and $|\gamma| \leq \sqrt{\alpha\beta}$. In this case,
\[A_1(z) = \frac{\alpha}{z}, \quad A_2(w) = \frac{\beta}{w}, \qand A(z, w) = \frac{\gamma}{zw},\]
thus Equation \eqref{eq conv-semigroup4} yields
\[F_{1, t}(z) = \sqrt{z^2 - 2\alpha t} \qand F_{2, t}(w) = \sqrt{w^2 - 2\beta t}, \]
where $F_{1, t}$ is defined so that it is analytic in $\bC \setminus [-\sqrt{2\alpha t}, \sqrt{2\alpha t}]$ and $F_{1, t}(z) =z(1+o(1))$ as $z\to \infty$, and similarly for $F_{2, t}.$ 
Solving the differential equation \eqref{eq conv-semigroup3} now produces the solution
\begin{align*}
G_t(z,w) &= \frac{1}{zw}\exp\left[\int_0^t\widetilde{A}(F_{1,s}(z), F_{2,s}(w))\,ds\right]\\
&= \frac{1}{zw}\exp\left[\int_0^t\left(\frac{\alpha}{z^2-2\alpha s} + \frac{\gamma}{\sqrt{z^2-2\alpha s}\sqrt{w^2-2\beta s}} + \frac{\beta}{w^2-2\beta s}\right)\,ds\right]\\
&= \frac{1}{\sqrt{z^2 - 2\alpha t}\sqrt{w^2 - 2\beta t}}\left(\frac{\sqrt{\beta z^2 - 2\alpha\beta t} - \sqrt{\alpha w^2 - 2\alpha\beta t}}{\sqrt{\beta} z - \sqrt{\alpha}w} \right)^{\frac{\gamma}{\sqrt{\alpha\beta}}},
\end{align*}
where the last power function is defined analytically in $(\bC\setminus\bR)^2$ so that it converges to $1$ as $z \to \infty$ or $w\to\infty$. The Cauchy transform of $(a, b)$ is then given by $G_1(z, w)$. 
\end{exam}

\begin{exam}
Suppose $(a, b)$ has a bi-monotonic Poisson distribution with $K_{m, n}(a, b) = \lambda \alpha^m\beta^n$, where $\lambda > 0$ and $(0,0) \neq (\alpha, \beta) \in \bR^2$. In this case,
\[A_1(z) = \frac{\lambda\alpha z}{z-\alpha}, \quad A_2(w) = \frac{\lambda\beta w}{w-\beta}, \quad A(z, w) = \frac{\lambda \alpha \beta}{(z-\alpha)(w-\beta)},\]
and 
\[
\widetilde A(z,w) = \lambda \left(\frac{ z w}{(z-\alpha)(w-\beta)}-1\right). 
\]
Muraki showed that the functions $G_{1,t}$ and $G_{2,t}$ can be expressed by using the Lambert $W$ function \cite{M2001}. We do not know how the Cauchy transform $G_t$ can be expressed.
\end{exam}

\begin{exam}\label{exam:compound}
Suppose $(a, b)$ has a compound bi-monotonic Poisson distribution with $K_{m, n}(a, b) = \lambda M_{m,n}(\nu)$, where $\lambda >0$ and $\nu$ is a probability measure on $\bR^2$ with compact support such that $\nu(\{(0,0)\})=0$. In this case, we can obtain 
\[\widetilde A(z,w) =\lambda \int_{\bR^2}\left( \frac{z w}{(z-s)(w-t)}-1 \right)d\nu(s,t).\]

\end{exam}

\section{Non-existence of bi-monotonic convolution of probability measures}\label{sec:analytic}

\subsection{Bi-monotonic convolution}

As Example \ref{not-state} shows, the bi-monotonic product of states is not a state in general.  However this does not quickly imply that a bi-monotonic convolution does not exist for probability measures on $\mathbb R^2$. We demonstrate that such a convolution indeed does not exist.

Let $(a_1, b_1)$ and $(a_2, b_2)$ be two pairs of commuting elements that are bi-monotonically independent with respect to $\varphi$ and whose distributions with respect to $\varphi$ are probability measures. 
For the distribution of $(a_1+a_2,b_1+b_2)$ to be a probability measure on $\mathbb R^2$, it is necessary that  for every $n \in \bN$ and every polynomial $p(x, y) = \sum^n_{k,m = 0} c_{k,m} x^k y^m$ where $c_{k,m} \in \bC$ we have 
\[
0 \leq \varphi(p(a_1+a_2, b_1+b_2)^*p(a_1+a_2, b_1+b_2)) = \sum^n_{i_1, i_2, j_1, j_2 = 0} c_{i_1, i_2} \overline{c_{j_1, j_2}} \varphi((a_1+a_2)^{i_1+j_1} (b_1+b_2)^{i_2+j_2}).
\]
Therefore, for a fixed $n$ we consider the $(n+1)^2 \times (n+1)^2$ matrix 
\[
X_n = [\varphi((a_1+a_2)^{i_1+j_1} (b_1+b_2)^{i_2+j_2})]
\]
where the rows are indexed (starting at 0 up to $n$) by the pairs $(i_1, i_2)$ and the columns are indexed by the pairs $(j_1, j_2)$.  Then
\[
\varphi(p(a_1+a_2, b_1+b_2)^*p(a_1+a_2, b_1+b_2)) = \langle X_n\vec{v}, \vec{v}\rangle, 
\]
where $\vec{v}$ is a vector of the $c_{k,m}$'s. For example the matrix $X_1$ is given by 
\[
X_1 = \begin{bmatrix}
1 & \varphi(a_1+a_2) & \varphi(b_1 + b_2) & \varphi((a_1+a_2)(b_1+b_2)) \\
\varphi(a_1 + a_2) & \varphi((a_1+a_2)^2) & \varphi((a_1+a_2)(b_1+b_2)) & \varphi((a_1+a_2)^2(b_1+b_2)) \\
\varphi(b_1 + b_2) & \varphi((a_1+a_2)(b_1+b_2)) & \varphi((b_1 + b_2)^2)  & \varphi((a_1+a_2)(b_1+b_2)^2) \\
 \varphi((a_1+a_2)(b_1+b_2)) &  \varphi((a_1+a_2)^2(b_1+b_2)) &  \varphi((a_1+a_2)(b_1+b_2)^2) &  \varphi((a_1+a_2)^2(b_1+b_2)^2)
\end{bmatrix}. 
\]

Assume that $(a_1,b_1)$ and $(a_2,b_2)$ have the same probability distribution $\mu = \frac{1}{2}(\delta_{(0,1)}+\delta_{(1,0)})$. Notice that 
\[
\int_{\bR^2} x^m y^n \, d\mu(x,y) = \begin{cases}
1 & \text{if }m= n=0, \\
\frac{1}{2} & \text{if }(m,n) \in \{(k, 0), (0, k) \, \mid \, k \in \bN\}, \\
0 & \text{otherwise}. 
\end{cases}
\]
We can use either the formula \eqref{eqn:biMonCauchy} or the direct definition of bi-monotonic independence, to obtain 
\[
X_1 = \begin{bmatrix}
1 & 1 & 1 & \frac{1}{2} \\
1 & \frac{3}{2} & \frac{1}{2} & \frac{5}{8} \\
1 & \frac{1}{2} & \frac{3}{2} & \frac{5}{8} \\
\frac{1}{2} & \frac{5}{8} & \frac{5}{8} & \frac{3}{4}
\end{bmatrix}.
\]
Hence $X_1$ is clearly a self-adjoint matrix.  However, one can check that $\det(X_1) = - \frac{1}{32}$.  This implies $X_1$ is not positive semidefinite and thus the distribution of $(a_1+a_2, b_1+ b_2)$ is not a probability measure.

\subsection{Compound bi-monotonic Poisson distributions}
To further illustrate the lack of a bi-monotonic convolution theory for probability measures, we show that the compound bi-monotonic Poisson distribution is not a probability measure in general. 
Consider a pair of elements $(a,b)$ in a non-commutative space that has a compound bi-monotonic Poisson distribution characterized by 
$$
K_{m,n}\equiv K_{m,n}(a,b)= \int_{\mathbb R^2}s^m t^n\, d\tau(s,t), 
$$ 
where $\tau = 15\delta_{(1,1)}+ 15\delta_{(-1,1)}+15\delta_{(1,-1)}$. Then $K_{m,n}=15(-1)^m + 15(-1)^n +15$ for $m,n \geq0, (m,n) \neq (0,0)$. 
We compute the moments of $(a,b)$ using the differential equations derived in Section \ref{subsec:GF-cumulants}. Solving the differential equation $\partial_t F_{j,t}(z) = -A_j(F_{j,t}(z)), F_{j,0}(z)=z$ (or using the monotonic moment-cumulant formula), we obtain 
\begin{align*}
G_{1,t}(1/z) = G_{2,t}(1/z) 
&= z + K_{1,0} t z^2 +  ( K_{2,0} t +K_{1,0}^2 t^2)z^3+ \cdots\\  
&= z + 15 t z^2 +  (45t+225t^2)z^3+ \cdots.  
\end{align*}
Hence 
\begin{align*}
&\int_0^1\tilde{A}(F_{1,s}(1/z), F_{2,s}(1/w))\, ds  \\
&= \int_0^1 \left( K_{1,0} G_{1,s}(1/z) + K_{0,1} G_{2,s}(1/w) + K_{2,0} G_{1,s}(1/z)^2 + K_{1,1}G_{1,s}(1/z) G_{2,s}(1/w) + K_{0,2} G_{2,s}(1/w)^2 \right.  \\
&\quad\left. + K_{2,1} G_{1,s}(1/z)^2 G_{2,s}(1/w) +K_{1,2}G_{1,s}(1/z) G_{2,s}(1/w)^2 + K_{2,2}G_{1,s}(1/z)^2 G_{2,s}(1/w)^2 \right)ds \\
&=15z + 15w +\frac{315 z^2}{2} - 15z w +\frac{315 w^2}{2} -\frac{195z^2 w}{2} -\frac{ 195z w^2}{2}-855 w^2 z^2 \\
& \quad+\text{~the sum of terms $z^m w^n$ with $m >2$ or $n>2$}.
\end{align*}
Then
\begin{align*}
G_{(a,b)}(1/z,1/w) &= z w \exp  \int_0^1\tilde{A}(F_{1,s}(1/z), F_{2,s}(1/w))\, ds \\
&=z w + 15z^2 w + 15z w^2 + 270 z^3 w + 210 z^2w^2+270z w^3+\frac{7455 z^3 w^2}{2} + \frac{7455 z^2w^3}{2} \\ &\qquad+\frac{131715 z^3 w^3}{2} +\text{~the sum of terms $z^m w^n$ with $m >3$ or $n>3$}. 
\end{align*}
Denote by $M_{m,n}$ the coefficient of $z^{m+1} w^{n+1}$ in the above expression of $G(1/z,1/w)$, which are the moments of our compound bi-monotonic Poisson distribution. The determinant of $4 \times 4$ matrix $(M_{m_1+ m_2,n_1+n_2})$, with the 4 pairs $\{(m_1, n_1): m_1, n_1 =0,1\}$ as the index of column and 4 pairs $\{(m_2,n_2): m_2,n_2=0,1\}$ as the index of row, is 
$$
\left|\begin{matrix} 
1 & 15 & 15 & 210 \\
15 & 270 & 210 & 7455/2 \\ 
15 & 210 & 270 & 7455/2 \\
210 & 7455/2 & 7455/2 & 131715/2
\end{matrix}\right|
=-857250 <0. 
$$
Hence $M_{m,n}$ are not moments of a probability measure on $\mathbb R^2$.

\section*{Acknowledgements} 
T.H.\ is supported by JSPS Grant-in-Aid for Young Scientists (B) 15K17549 and (A) 17H04823. P.S.\ is supported by NSERC (Canada) grant RGPIN-2017-05711.  The authors are grateful to Malte Gerhold for pointing out an error of the previous manuscript.

\end{document}